\documentclass[11pt]{article}
\usepackage{amsfonts}
\usepackage{amsthm}
\usepackage{amsmath}
\usepackage{amscd}
\usepackage{amsmath, amssymb}
\usepackage{float}

\usepackage[all]{xy}
\usepackage{tikz}
\usetikzlibrary{positioning} 
\usetikzlibrary{arrows,decorations.pathmorphing,shapes} 
\usepackage{tikz-cd}

\usepackage{graphicx, color}
\usepackage[T1]{fontenc}
\usepackage[math]{anttor}
\usepackage{caption}
\usepackage[latin2]{inputenc}
\usepackage{t1enc}
\usepackage[mathscr]{eucal}
\usepackage{indentfirst}
\usepackage{graphicx, color}
\usepackage{graphics}
\usepackage{pict2e}
\usepackage{epic}
\numberwithin{equation}{section}
\usepackage[margin=2.9cm]{geometry}
\usepackage{epstopdf} 
\usepackage{latexsym}
\usepackage{amsmath,amssymb,amsfonts}
\usepackage{graphicx} 
\usepackage{float}
\usepackage{amsmath, amsfonts, amssymb, amsxtra, amsthm, mathrsfs, amscd}
\usepackage{enumerate}

\usepackage{enumitem}
\graphicspath{{/home/tina/Desktop/tree}}

\graphicspath{{/home/tina/Desktop/tree}}

\newcommand{\Cg}{\mathcal{C}_g}

 \newcommand{\sys}{\operatorname{sys}}

\theoremstyle{plain}
\newtheorem{Thm}{Theorem}[section]
\newtheorem{Lem}[Thm]{Lemma}

\newtheorem{Prop}[Thm]{Proposition}

 \theoremstyle{definition}

\newtheorem{?}[Thm]{Problem}

\newtheorem*{Clm}{Claim}
\newenvironment{Clm*}
 {\pushQED{\qed}\Clm}
 {\popQED\endClm}

\newtheorem{Remark}{Remark}

\begin{document}

\title{Entropy and self-intersection number of geodesic currents on compact hyperbolic surfaces}




\date{}

 \maketitle
\centerline{ \author {Tina Torkaman}}

\begin{abstract}
Let $X$ be a compact hyperbolic surface of genus $g$, and $C$ a geodesic current on $X$. 
Denote by $h_X(C)$ the measure-theoretic entropy of $C$ with respect to the geodesic flow. Assume that $C$ is ergodic. In this paper, we establish a quantitative upper bound on $h_X(C)$ in terms of its self-intersection number $i(C,C)$ and the systole of $X$. In particular, we show that small self-intersection number forces small entropy.  
\end{abstract}

 \tableofcontents
\thispagestyle{empty}
\pagenumbering{arabic}

 \section{Introduction}
 Let $X$ be a compact hyperbolic surface of genus $g$. In this paper, for any ergodic geodesic current, we give an upper bound on its measure-theoretic entropy in terms of its self-intersection number and the systole of $X$. In the proof, we give an upper bound on the number of closed geodesics with a small self-intersection number compared to their length squared, which is interesting in its own right.
 
 \paragraph{Geodesic currents.}  Let $T_1(X)$ be the unit tangent bundle of $X$. A \textit{geodesic current} $C$ of $X$ is a Borel measure on $T_1(X)$ which is invariant under both geodesic flow and involution map $v \mapsto -v$ for $v \in T_1(X)$. Let $\Cg$ denote the space of all geodesic currents on a compact genus $g$ surface. Closed geodesics give examples of geodesic currents. Moreover, the set of weighted closed geodesics is dense in $\Cg$ equipped with weak topology. See \S \ref{sec: geod} for more details on geodesic currents.

 Bonahon showed that the notion of hyperbolic length $\ell_X$ and intersection number $i(-,-)$ between closed geodesics can continuously extend to $\Cg$. In particular, the length of $C \in \Cg$ is $\ell_X(C):=C(T_1(X))$, the total volume of $T_1(X)$ with respect to $C$.
 
 Given $C \in \Cg$ with $\ell_X(C)=1$, denote its measure-theoretic entropy with respect to the geodesic flow by $h_X(C)$. It is known that $h_X(C)=0$ when $i(C,C)=0$, that is, when $C$ is a \textit{measured lamination}. 
This follows from the work of Birman and Series~\cite{BirSer}, who showed that the Hausdorff dimension of the support of such a current is equal to $1$. 
Since the Hausdorff dimension of the support of $C$ is $\geq 2h_X(C)+1$
(see~\cite[Theorem~4.1]{fathi-entropy}), it follows that $h_X(C)=0$.

This naturally leads to the following question:

\textit{Does small self-intersection number $i(C,C)$ imply small entropy $h_X(C)$?}

 Our main result establishes the following inequality which gives an affirmative answer to this question when $C$ is ergodic. Let $\sys(X)$ denote the systole of $X$, which is the length of the shortest closed geodesic on $X$.

\begin{Thm}\label{thm: main.entropy}
 Let $C$ be an ergodic geodesic current with $\ell_X(C)=1$ and $i(C,C)\neq 0$. Then we have:
$$
h_X(C) \leq b_g \sqrt{i(C,C)}|\log(\frac{b_X}{\sqrt{i(C,C)}})|,
$$
where 
$$b_g\leq 385(g-1)\, , \, \, \, \, \, \, b_X \leq \frac{c_0(g-1)^2}{\sys(X)^2}, \, \, \, \, \, \, c_0\leq 5974.
$$
\end{Thm}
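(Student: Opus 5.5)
Since $C$ is ergodic, I will compute $h_X(C)$ by counting orbit segments. The plan is to combine a Katok-type entropy formula with a count of closed geodesics of length about $T$ whose self-intersection number is small compared to $T^2$.

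\emph{Step 1: reduction to orbit segments.} Fix $\epsilon>0$ and a large time $T$. By the Birkhoff ergodic theorem and continuity of the intersection form, for $C$-typical $v$ the orbit segment $\gamma_{v,T}=\{\phi_t v\}_{0\le t\le T}$ satisfies two properties:
\begin{itemize}
\item its normalized self-intersection count, $\#\{\text{transverse self-intersections of }\gamma_{v,T}\}/T^2$, is close to $i(C,C)/2$;
\item it returns close to its starting vector.
\end{itemize}
Close the segment with an arc of length bounded by a constant depending on $\sys(X)$ (injectivity radius). This gives a closed geodesic $\gamma$ with
$$\ell(\gamma)\le T+O(1), \qquad i(\gamma,\gamma)\le (i(C,C)+\epsilon)T^2+O(T).$$
Two segments that are $(T,\delta)$-separated yield distinct closed geodesics, with $\delta$ comparable to $\sys(X)$. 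Katok's formula then gives
$$h_X(C)\le \limsup_T \frac1T\log N(T,\,K T^2),\qquad K\approx i(C,C),$$
where $N(T,m)$ counts closed geodesics of length $\le T$ with at most $m$ self-intersections.

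\emph{Step 2: counting geodesics with few self-intersections.} I will bound $N(T,KT^2)\le \exp\big(b_g\sqrt K\,T\,|\log(b_X/\sqrt K)|\big)$ for $K$ small. The idea is a decomposition of a geodesic with $k\approx KT^2$ self-intersections:
\begin{itemize}
\item Cut $\gamma$ into $n\approx \sqrt K\,T$ subarcs of length about $T/n=1/\sqrt K$.
\item On average each subarc then has $O(1)$ self-intersections with itself. Pigeonholing, most subarcs are almost simple.
\item Each almost-simple subarc, together with its endpoint data, is determined up to homotopy by a simple multicurve/arc system on $X$. Its combinatorial type can be encoded by a pants decomposition with curves of length $\lesssim (g-1)/\sys(X)$ (Bers-type constant), plus Dehn–Thurston coordinates. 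The coordinates are bounded polynomially in the arc length, $\lesssim (g-1)/(\sys(X)\sqrt K)$.
\item The number of choices per subarc is therefore at most $\big(c(g-1)^2/(\sys(X)^2 K)\big)^{O(g)}$, since there are $6g-6$ coordinates of polynomial size.
\end{itemize}
Multiplying over the $n$ subarcs gives
$$\log N\le n\cdot O(g)\log\frac{b_X^2}{K}\approx b_g\sqrt K\,T\,\Big|\log\frac{b_X}{\sqrt K}\Big|,$$
which is the shape of the theorem. Dividing by $T$ and letting $\epsilon\to0$ concludes the proof.

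\emph{Main obstacle.} I expect Step 2 to be the hardest, for two reasons:
\begin{itemize}
\item Short geodesic arcs of length $L$ must be controlled by simple (or boundedly non-simple) arcs on the thick/thin decomposition, with explicit constants.
\item The bad subarcs, where intersections concentrate, must be handled. For these I would bound the count by the total hyperbolic growth $e^{\text{length}}$ over a set of total length $o(T)$. Alternatively, I would choose the cut points adaptively so that each piece has at most one self-intersection.
\end{itemize}
The explicit numbers $385$ and $5974$ would come from tracking these constants.
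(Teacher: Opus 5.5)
Your Step~1 is essentially the paper's Theorem~\ref{thm: entropy<P}: Katok's formula, the closing lemma and Birkhoff's theorem. It needs two small repairs. First, continuity of $i(\cdot,\cdot)$ holds on currents, not on orbit segments, so the bound $i(\gamma_v,\gamma_v)\le (i(C,C)+\epsilon)\ell_X(\gamma_v)^2$ should come from weak convergence $\gamma_v/\ell_X(\gamma_v)\to C$ of the shadowing closed geodesics, as in Proposition~\ref{prop: entropy.a.e.self-int}. Second, separated orbit points give closed geodesics only up to multiplicity $O(T/\epsilon)$, which is harmless.

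The genuine gap is in Step~2, at ``on average each subarc then has $O(1)$ self-intersections with itself''. Take $n=\sqrt K\,T$ windows of length $L=1/\sqrt K$. Spreading the budget $KT^2=n^2$ over the $n^2$ pairs of windows only bounds the average number of crossings between two \emph{different} windows. The self-intersections of a single window are diagonal terms. Their sum is bounded only by $KT^2=n\cdot\sqrt K\,T$, and each one only by the trivial $\approx I(X)L^2=I(X)/K$. Nothing forces most windows to be almost simple. For example, let $\lambda$ be a nonseparating simple closed geodesic. Let $w$ be a closed curve crossing $\lambda$ once and winding $\sqrt M$ times around a figure-eight geodesic disjoint from $\lambda$, so $i(w,w)\approx M$. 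Consider the geodesic in the class $(\lambda^{A}w)^{B}$ with $A\,\ell_X(\lambda)\approx\sqrt M\,L$. Since $\lambda^{A}w$ is freely homotopic to the Dehn twist $T_\lambda^{A}(w)$, its self-intersection is $\approx B^2 i(w,w)\approx B^2M\approx KT^2$. Yet about $B$ of its $n\approx B\sqrt M$ windows contain a figure-eight excursion and carry about $M$ self-intersections each. So for any fixed threshold $M$, a proportion $\approx M^{-1/2}$ of the windows, independent of $K$ and $T$, fails to be almost simple.

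Neither of your fallbacks repairs this.
\begin{enumerate}
\item Paying $e^{\text{length}}$ on the bad windows costs $e^{T/\sqrt M}$. This fits the target $e^{O(\sqrt K\log(1/K))T}$ only if $M\gtrsim L^2/\log^2 L$. But then ``good'' windows may carry that many self-intersections, and such arcs are far from polynomially many.
\item A multiscale version would need two things. One is a count of order $\exp(O(\sqrt m\log(L/\sqrt m)))$ for arcs with $m$ self-intersections, which is a local form of the theorem itself. The other is $\sum_j\sqrt{m_j}=O(\sqrt K\,T)$ over the windows, which does not follow from $\sum_j m_j\le KT^2$ (that gives only $n^{3/2}$).
\item Adaptive cutting has the same defect: the budget alone does not bound the number of pieces by $O(\sqrt K\,T)$.
\end{enumerate}

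The paper never localizes along $\gamma$. It fixes a proper hexagon decomposition with $\ell_{com}\le c(X)\ell_X$ (Lemma~\ref{lemma: com.length}). It then passes to the linearly modified representative $\phi(\gamma)$, which has no crossings of type $2(a)$ or $3$ and no more type-$2$ crossings than $\gamma$ (Theorem~\ref{thm: linear.homotop}). This representative is recovered from its $48g-48$ edge words (Lemma~\ref{lem: match.words}). The global budget $\epsilon T^2$ then makes every edge word $(n,\delta)$-admissible with $n=c(X)T$ and $\delta=\epsilon/c(X)^2$. The elementary count $\mathrm{poly}(n)(e/\sqrt\delta)^{8\sqrt\delta n}$ of Lemma~\ref{lem: number.permutation.hexagon} gives $\sqrt\epsilon\log(b_X/\sqrt\epsilon)$ with $b_X=e\,c(X)$ and $b_g\le 385(g-1)$. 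This encoding converts ``few self-intersections'' directly into a constraint on each edge word, without needing to know where along $\gamma$ the intersections sit. That is exactly the information your decomposition lacks.
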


For more information on $b_X$, see Remarks \ref{remark: b}, \ref{remark: bc.bound}, \ref{remark: b1.1,3}. The argument in the proof of Theorem~\ref{thm: main.entropy} shows that $h_X(C)=0$ when $i(C,C)=0$.

The proof of Theorem \ref{thm: main.entropy} is organized into two main steps, which we explain below.

\paragraph{Step $1$: self-intersection number of random geodesics.} Given a geodesic current $C$ with $\ell_X(C)=1$, Katok showed, in a more general setting for negatively curved surfaces, $h_X(C)$ is bounded above by the growth rate of the number of closed geodesics on $X$ \cite{Ktok.entrpy}. More precisely, define $P_{\infty}(T)$ as the set of closed geodesics on $X$ with length $\leq T$. Katok showed 
 $$
 h_X(C) \leq \lim \limits_{T \to \infty} \frac{\log P_{\infty}(T)}{T}.
 $$
When $X$ is a hyperbolic surface, this growth rate is equal to $1$, and this upper bound is attained only for $C=L_X$, the Liouvile measure of $X$.

Let $\mathcal{G}$ denote the set of all closed geodesics on $X$. For $\epsilon \geq 0$, define

$$
P_{\epsilon}(T):=| \{\gamma \in \mathcal{G}: \ell_X(\gamma)\leq T, \, i(\gamma,\gamma)\leq \epsilon T^2 \}|.
$$

Inspired by Katok's method and by studying the self-intersection number of random geodesics with respect to $C$,  we obtain the following upper bound.

\begin{Thm}\label{thm: entropy<P}
Let $C$ be an ergodic geodesic current with $\ell_X(C)=1$. Then we have
$$
h_X(C)\leq \lim \limits_{T\to \infty} \frac{\log P_{i(C,C)}(T)}{T}.
$$
\end{Thm}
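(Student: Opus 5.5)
We follow Katok's proof of the variational bound $h_X(C)\le \limsup_{T\to\infty}\frac{1}{T}\log P_\infty(T)$. The only new ingredient is to check that the closed geodesics produced by the closing argument have self-intersection at most roughly $i(C,C)T^2$. Regard $C$ (normalized so $\ell_X(C)=1$) as an ergodic flow-invariant probability measure $\mu$ on $T_1(X)$. By Katok's entropy formula, for any fixed $\delta>0$ and $\kappa\in(0,1)$, every set of $\mu$-measure at least $1-\kappa$ needs at least $e^{T(h_X(C)-\eta)}$ Bowen $(T,\delta)$-balls to cover it, for $T$ large. The plan is to pick one good vector in each Bowen ball of a maximal separated family inside a well-chosen set $E_T$. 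Each such vector is then closed up into a closed geodesic $\gamma_v$ of length $\approx T$ with $i(\gamma_v,\gamma_v)\le (i(C,C)+\eta)T^2$. Finally we check that the map $v\mapsto\gamma_v$ is at most subexponentially-to-one.

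\textbf{Step 1 (choosing $E_T$).} We first make intersections along a typical orbit segment converge to $i(C,C)$. The self-intersection of a geodesic current can be written as
\[
i(C,C)=\int \int F\,d\mu\,d\mu ,
\]
where $F(v,w)$ counts transverse intersections of the unit-length segments starting at $v$ and at $w$. This is the Bonahon-type formula, after passing to $\mu\times\mu$ on $T_1(X)\times T_1(X)$, where one integrates a local intersection kernel. We apply the ergodic theorem to $\mu\times\mu$ for the product flow $\phi_s\times\phi_t$. The product of an ergodic flow with itself may fail to be ergodic for the diagonal action, so we use two independent time parameters $(s,t)\in[0,T]^2$ and the pointwise ergodic theorem for the $\mathbb{R}^2$-action $(s,t)\mapsto(\phi_s v,\phi_t w)$, which is ergodic when $\mu$ is. The conclusion we need is this: for $\mu$-a.e.\ $v$, the number of self-intersections of the orbit segment $\phi_{[0,T]}v$ is $(1+o(1))\,i(C,C)\,T^2/2\cdot 2 = (1+o(1))\,i(C,C)T^2$, with the normalization matching how $i(\gamma,\gamma)$ counts double points. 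I expect this to be the main obstacle. It is a self-interaction of a single orbit, not a pair of independent generic points. To handle it I would split $[0,T]$ into blocks of length $L$, use Birkhoff averages of the functions $v\mapsto\#(\phi_{[0,L]}v\cap \text{local pieces})$ against a partition of $T_1X$ into small flow boxes, and approximate $F$ by simple functions. Then $\frac{1}{T^2}\#\{\text{self-intersections}\}$ converges to $\sum_{\text{boxes}}\mu(B_j)\mu(B_k)\,F_{jk}$, which converges to $i(C,C)$ as the partition refines. Egorov's theorem then gives a set $E_T$ with $\mu(E_T)>1-\kappa$ on which the convergence is uniform.

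\textbf{Step 2 (closing).} We also require that $E_T$ lie in a set where $\phi_T v$ returns $\delta$-close to $v$; by Poincaré recurrence and ergodicity, this holds for a positive proportion of times $T$, which suffices for the limit. The Anosov closing lemma then gives a closed geodesic $\gamma_v$ of length $T+O(\delta)$ that $\delta'$-shadows the segment. Transverse intersections of the segment are stable under small perturbation, except near the closing region, which contributes $O(T)$ extra crossings. Hence $i(\gamma_v,\gamma_v)\le i(C,C)T^2+o(T^2)$. Replacing $i(C,C)$ by $i(C,C)+\eta$ and then letting $\eta\to0$ uses the monotonicity of $P_\epsilon$ in $\epsilon$ (or right-continuity of the growth rate in $\epsilon$; if that fails, the stated limit should be read with the $o(T^2)$ absorbed).

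\textbf{Step 3 (counting).} Distinct $(T,\delta)$-separated vectors give closed geodesics that differ, up to the choice of basepoint on $\gamma_v$. Each closed geodesic of length $\le T+1$ arises from at most $O(T/\delta)$ separated points. Therefore $P_{i(C,C)+\eta}(T+1)\gtrsim \frac{\delta}{T}e^{T(h_X(C)-\eta)}$. Taking logarithms, dividing by $T$, and letting $\eta,\delta\to0$ proves the theorem.
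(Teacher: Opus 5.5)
Your skeleton is the paper's: a maximal separated set inside a good set, closing each orbit segment, and the $O(T/\epsilon)$-to-one count. However, the step where the self-intersection constraint enters is not established, and the final limit is left open.

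\textbf{The bound $i(\gamma_v,\gamma_v)\le (i(C,C)+\eta)T^2$.} In Step 1 the $\mathbb{R}^2$ ergodic theorem is beside the point, as you observe yourself. Your flow-box approximation of $F$ gives $\iint F\,d\nu_T\,d\nu_T\to\iint F\,d\mu\,d\mu$, for the empirical measures $\nu_T$ of $\phi_{[0,T]}v$, only if $F$ is continuous off a $\mu\times\mu$-null set. That means $\mu\times\mu$ must give no mass to pairs of segments lying on a common geodesic, or having an endpoint of one on the other. You never verify this.

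In Step 2, to bound $i(\gamma_v,\gamma_v)$ from \emph{above} you need every crossing of $\gamma_v$ to come from a crossing of the segment. This fails for crossings at angles comparable to the shadowing error, and showing that such nearly tangent crossings number $o(T^2)$ needs the same kind of measure-zero argument.

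The idea you are missing is to close up first and then invoke Bonahon. Birkhoff's theorem plus the exponential shadowing of Lemma \ref{lem: hyp} (the distance to the segment tends to $0$ on the middle portion) show that $\gamma_n/\ell_X(\gamma_n)\to C$ weakly in $\mathcal{C}_g$. Continuity of $i(\cdot,\cdot)$ on $\mathcal{C}_g$ then gives $i(\gamma_n,\gamma_n)/\ell_X(\gamma_n)^2\to i(C,C)$ directly (Proposition \ref{prop: entropy.a.e.self-int}). No crossings of orbit segments need to be counted.

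Relatedly, asking for a return at exactly time $T$ does not produce a set of measure near $1$. For mixing $\mu$ its measure is roughly $\int\mu(B(v,\delta))\,d\mu(v)$, so it clashes with $\mu(E_T)>1-\kappa$. The paper instead allows a return at some $t\in[T,(1+\epsilon)T]$, which by Birkhoff holds on a set of measure tending to $1$ (Lemma \ref{lem: set=1}).

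\textbf{The final limit.} Your argument only yields $h_X(C)\le P_{i(C,C)+\eta}$ for every $\eta>0$. Monotonicity of $P_\epsilon$ runs the wrong way here, and you leave right-continuity open, so as written you prove only $h_X(C)\le\lim_{\eta\to0^+}P_{i(C,C)+\eta}$. The missing step is a one-line rescaling. A geodesic with $\ell_X\le T$ and self-intersection $\le(\epsilon+\eta)T^2$ is counted by $P_\epsilon(\sqrt{(\epsilon+\eta)/\epsilon}\,T)$. Hence $P_{\epsilon+\eta}\le\sqrt{(\epsilon+\eta)/\epsilon}\,P_\epsilon$ for $\epsilon>0$ (Lemma \ref{lem: p-cont}), which gives right-continuity at $\epsilon=i(C,C)>0$.
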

In the proof of Theorem \ref{thm: entropy<P}, we show that for a random closed geodesic $\gamma$, with respect to the current $C$, the normalized self-intersection number
\[
\frac{i(\gamma,\gamma)}{\ell_X(\gamma)^2}
\]
is close to $i(C,C)$; see Proposition~\ref{prop: entropy.a.e.self-int} and Lemma~\ref{lem: set=1}. 

For further details of the proof of Theorem~\ref{thm: entropy<P}, see \S\ref{sec: entropy}.

\paragraph{Step $2$: counting closed geodesics.} In this step, we obtain an upper bound on $P_{\epsilon}(T)$.

\begin{Thm}\label{thm: P-closed-geod}
For $\epsilon>0$, and $T$ large enough (in terms of $\epsilon$ and $g$) we have
$$
 P_{\epsilon}(T) \leq (\frac{b_X}{\sqrt{\epsilon}})^{b_g\sqrt{\epsilon}T}.
$$
where 
$$
b_g\leq 385(g-1) \, \, \, \, \, \, \, b_X \leq \frac{c_0(g-1)^2}{\sys(X)^2}\, , \, \, \, \, \, \, \,  c_0 \leq 5974.
$$
\end{Thm}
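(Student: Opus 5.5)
We would prove Theorem~\ref{thm: P-closed-geod} by a combinatorial encoding of closed geodesics with few self-intersections, in the spirit of the counting arguments of Basmajian, Sapir and Mirzakhani for geodesics of bounded self-intersection. The guiding principle is that a closed geodesic $\gamma$ of length $\leq T$ with $i(\gamma,\gamma)\leq \epsilon T^2$ can be cut into about $k\approx \sqrt{\epsilon}\,T$ arcs. Each arc has length about $T/k\approx 1/\sqrt{\epsilon}$ and has, on average, few self-intersections and few intersections with the other arcs. The arcs should then be replaced by simple objects: pieces of a simple multicurve or of a pants decomposition. The number of choices per arc would be polynomial in $1/\sqrt{\epsilon}$ and in $(g-1)/\sys(X)$. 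Raising this count to the power $k$ gives the bound $(b_X/\sqrt{\epsilon})^{b_g\sqrt{\epsilon}T}$.

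\textbf{Step 1 (choosing the cut).} Subdivide $\gamma$ into $m$ consecutive segments of length $T/m$, with $m=\lceil c\sqrt{\epsilon}\,T\rceil$ for a constant $c$ to be fixed later. Count the pairs of segments that intersect. The total number of intersecting pairs is at most $i(\gamma,\gamma)\leq\epsilon T^2\approx m^2/c^2$. By a Tur\'an-type (or averaging) argument, we can select a positive proportion of the segments whose union has at most $O(1)$ intersections per segment. Alternatively, we group consecutive segments into blocks so that each block is ``almost simple.''

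\textbf{Step 2 (encoding each block).} A geodesic arc of length $L\approx 1/\sqrt{\epsilon}$ with $O(1)$ self-intersections can be homotoped, rel endpoints taken on a fixed finite net of $X$, to a concatenation of $O(1)$ simple arcs. Counting simple arcs of length $\leq L$ between net points is where the systole and the genus enter. A net of mesh $\sim\sys(X)$ has $O((g-1)/\sys(X)^2)$ points, by the area $4\pi(g-1)$. Simple arcs of length $L$ are then counted by a train-track or pants-decomposition count (via a Bers pants decomposition, whose cuff lengths are controlled linearly in $g$). This gives polynomially many arcs in $L$, say $\leq (C(g-1)L/\sys(X))^{O(g)}$. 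That exponent $O(g)$, multiplied by the number of blocks, is what produces $b_g\sim(g-1)$. Taking logarithms, each block contributes $O(g)\log\bigl(b_X/\sqrt{\epsilon}\bigr)$.

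\textbf{Step 3 (assembling the count).} Since closed geodesics are determined by free homotopy classes, $\gamma$ is recovered from the cyclic sequence of block codes. There are at most $m$ blocks, so
\[
P_\epsilon(T)\leq \binom{m'}{m}\,\bigl(\text{codes per block}\bigr)^{m}\leq (b_X/\sqrt{\epsilon})^{b_g\sqrt{\epsilon}T}.
\]
The binomial factor for choosing cut points is also absorbed when $T$ is large in terms of $\epsilon,g$. Explicit constants such as $385$ and $5974$ would come from carefully tracking the net cardinality and the simple-arc count.

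The main obstacle is Step 1 combined with Step 2. A global intersection budget of $\epsilon T^2$ must be converted into local simplicity of short pieces, and the resulting arcs must still determine $\gamma$ up to few choices. Intersections could concentrate, for instance when $\gamma$ winds around a short curve. I would first handle this by allowing blocks of variable length: a block ends when it accumulates $\sim$ one self-intersection per $\sqrt{\epsilon}$-scale, so that long nearly simple stretches, such as windings, are encoded cheaply by a multicurve and a multiplicity, costing $\log T$ per block rather than more. The winding number is bounded using the collar lemma, which is where $\sys(X)^{-2}$ appears in $b_X$.
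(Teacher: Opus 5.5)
There is a genuine gap, and it is exactly at the step you call the main obstacle: the budget $i(\gamma,\gamma)\le\epsilon T^2$ does not localize \emph{along the curve}.

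\emph{Step 1 fails.} With $m\approx c\sqrt{\epsilon}\,T$ segments, a bound of $m^2/c^2$ on intersecting pairs allows an average of about $2m/c^2\approx 2\sqrt{\epsilon}\,T/c$ crossings per segment. For equidistributed curves this average is actually attained, and it grows with $T$. So the claim that each arc has few intersections with the other arcs is false. Tur\'an only gives an independent set of about $c^2/2$ segments, a bounded number rather than a positive proportion.

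\emph{The budget says nothing about individual segments.} A geodesic arc of length $L$ has $O_X(L^2)$ self-intersections. So all $m$ segments together have $O_X(mL^2)=O_X(TL)=o(\epsilon T^2)$ internal self-intersections. The budget therefore only constrains how the segments cross one another, and Step~2 has no mechanism that uses this. Even if you certified a proportion $\theta$ of good segments, the remaining proportion $1-\theta$ would still have to be encoded. That costs about $e^{(1-\theta)T}$, a growth rate that does not tend to $0$ with $\epsilon$.

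\emph{The variable-length fallback does not repair this.} You would need that a curve with $i(\gamma,\gamma)\le\epsilon T^2$ has only $O(\sqrt{\epsilon}\,T)$ maximal nearly simple blocks. That is a lower bound of order $k^2$ on $i(\gamma,\gamma)$ in terms of the number $k$ of expensive blocks. This quadratic relation is essentially the whole content of the theorem, and you give no argument for it. Separately, charging $\log T$ per block over $\sim\sqrt{\epsilon}\,T$ blocks gives $T^{O(\sqrt{\epsilon}T)}$, which has infinite exponential growth rate. You would at least need the concavity bound $\sum_i\log n_i\le k\log(T/k)$.

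The paper gets the quadratic mechanism by localizing \emph{in space} instead. It fixes a proper hexagon decomposition and bounds $\ell_{com}(\gamma)\le c(X)\ell_X(\gamma)$ (Lemma~\ref{lemma: com.length}). It then replaces $\gamma$ by a linearly homotopic proper curve $\phi(\gamma)$ with no crossings of type $2(a)$ or $3$, and no more type $2$ crossings than $\gamma$ (Theorem~\ref{thm: linear.homotop}). The curve $\gamma$ is then determined by the $48g-48$ edge words (Lemma~\ref{lem: match.words}).

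At a single edge all $n\le c(X)T$ strands are seen at once. The crossings among strands through that edge are read off from the order of letters in its word, and there are at most $\epsilon T^2=\delta n^2$ of them, with $\delta=\epsilon/c(X)^2$. The budget is now quadratic in the number of objects being ordered, which is what makes counting possible. Once the $\sqrt{\delta}\,n$ rightmost letters of one kind are fixed, at most $\sqrt{\delta}\,n$ letters of another kind can lie to their left. Hence each word has at most $n^{O(1)}\binom{n}{\sqrt{\delta}n}^{8}$ possibilities (Lemma~\ref{lem: number.permutation.hexagon}), which gives $(e\,c(X)/\sqrt{\epsilon})^{O((g-1)\sqrt{\epsilon}\,T)}$.

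In particular, $b_X=e\,c(X)$ comes from the combinatorial-length constant and $b_g$ from the number of edge words. Neither comes from collar estimates or from the degree of a simple-arc count.
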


For more details on $b_X$, see Remarks \ref{remark: b}, \ref{remark: bc.bound}, \ref{remark: b1.1,3}.

\textbf{The idea behind the proof of Theorem \ref{thm: P-closed-geod}}. Fix a \emph{proper hexagon decomposition} of $X$ (see \S\ref{sec: count} for the definition). 
Given an oriented closed geodesic $\gamma$, we construct a closed curve $\phi(\gamma)$ that is homotopic to $\gamma$ as follows. The curve $\phi(\gamma)$ is obtained by homotoping $\gamma$ along the edges of the hexagons, such that points lying on an edge are moved only along that same edge throughout the homotopy. Then $\phi(\gamma)$ is made up of geodesic arcs within each hexagon. 
The construction ensures that every two oriented subarcs of $\phi(\gamma)$ whose start and end points lie on the same pair of edges do not intersect (see Figure~\ref{fig: intro.hexagon}). This non-intersection property is the key for our counting argument. 
Indeed, once we specify for each edge $e$ of a hexagon the edges to which the outgoing (or incoming) arcs of $\phi(\gamma)$ from $e$ are connected, then the curve $\phi(\gamma)$ is uniquely determined. 
Consequently, counting closed geodesics reduces to counting the possible symbolic words associated with the outgoing (or incoming) arcs at each edge. 
See \S\ref{sec: count} for more details.

\begin{figure}[h]
    \centering

\tikzset{every picture/.style={line width=0.75pt}} 

\begin{tikzpicture}[x=0.75pt,y=0.75pt,yscale=-0.7,xscale=0.7]

\draw   (100,171.4) -- (129.29,108) -- (226.91,108) -- (256.2,171.4) -- (226.91,234.8) -- (129.29,234.8) -- cycle ;
\draw  [dash pattern={on 4.5pt off 4.5pt}]  (122,124) -- (145.65,233.55) ;
\draw [shift={(146.08,235.51)}, rotate = 257.82] [color={rgb, 255:red, 0; green, 0; blue, 0 }  ][line width=0.75]    (10.93,-3.29) .. controls (6.95,-1.4) and (3.31,-0.3) .. (0,0) .. controls (3.31,0.3) and (6.95,1.4) .. (10.93,3.29)   ;
\draw  [dash pattern={on 4.5pt off 4.5pt}]  (105,157) -- (202.5,233.27) ;
\draw [shift={(204.08,234.51)}, rotate = 218.04] [color={rgb, 255:red, 0; green, 0; blue, 0 }  ][line width=0.75]    (10.93,-3.29) .. controls (6.95,-1.4) and (3.31,-0.3) .. (0,0) .. controls (3.31,0.3) and (6.95,1.4) .. (10.93,3.29)   ;

\end{tikzpicture}

    \caption{The closed curve $\phi(\gamma)$ does not have intersection of this type}
    \label{fig: intro.hexagon}
\end{figure}
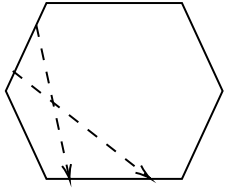

\paragraph{Question.} Can the ergodicity assumption in Theorem \ref{thm: main.entropy} be removed? Specifically, if geodesic currents $\{ C_n\}$ satisfy $i(C_n,C_n)\to 0$ as $n \to \infty$, does it follow that $h_{X}(C_n) \to 0$?

\paragraph{Notes and references.}

A qualitative analog of Theorem~\ref{thm: main.entropy}, formulated in terms of the Hausdorff dimension of geodesic support rather than geodesic current entropy, can be deduced from the work of Sapir~\cite{Sapir.BS}. More precisely, she showed that the Hausdorff dimension of the set of geodesic rays with small self-intersection relative to their length is small. Additional bounds on $P_{\epsilon}(T)$ are also obtained in \cite{sapir-bounds}. However, the estimates available there are insufficient to imply Theorem \ref{thm: main.entropy}. In particular, those results give a constant upper bound on the limit appearing in Theorem \ref{thm: entropy<P} independent of $\epsilon$.

\paragraph{Acknowledgements.} I would like to thank Curt T. McMullen for his continuous help, invaluable discussions, and insightful suggestions related to the results of this paper. I also thank Yongquan Zhang for helpful comments on the exposition of this paper.

\section{Geodesic currents}\label{sec: geod}

In this section, we briefly explain some basics of the theory of geodesic currents. For a reference, see \cite{Bon.gc.3}\cite{Bon.gc.Tech}.

Let $X:=\mathbb{H}/\Gamma$ be a compact hyperbolic surface. Let $T_1(X)$ denote the unit tangent bundle of $X$. A geodesic current $C$ is a Borel measure on $T_1(X)$ that is invariant under both the geodesic flow and the involution map (which is the map that sends $v$ to $-v$ for $v\in T_1(X)$).  
 
 Let $\Cg$ be the space of all geodesic currents.
 
 Here are some important examples of geodesic currents:
 \begin{itemize}
     \item \textbf{Closed geodesics.}
Let $\gamma$ be a closed geodesic on $X$. The unit tangent vectors along $\gamma$ determine a subset $\widetilde{\gamma} \subset T_1(X)$. 
The arclength measure on $\gamma$ induces a measure on $T_1(X)$ supported on $\widetilde{\gamma}$, which is invariant under both the geodesic flow and the involution map $v \mapsto -v$ for $v \in T_1(X)$.

Weighted closed geodesics form a dense subset of $\mathcal{C}_g$. 
Bonahon showed that the notions of hyperbolic length and intersection number for closed geodesics extend continuously to all geodesic currents. 
For $C_1, C_2 \in \mathcal{C}_g$, we denote their intersection number by $i(C_1,C_2)$ and the hyperbolic length of $C_1$ by $\ell_X(C_1)$, where
$$
\ell_X(C_1) = C_1(T_1(X)),
$$
which is the total volume of $T_1(X)$ with respect to $C_1$.

An explicit definition of the intersection form on $\mathcal{C}_g$ can also be found in~\cite{Bon.gc.3}.

    \item \textbf{Measured laminations.} These are exactly geodesic currents with zero self-intersection number.
    
     \item \textbf{Liouville measure.} This is denoted by $L_X$ and is the canonical volume measure on $T_1(X)$ obtained from the hyperbolic metric on $X$.  
 \end{itemize}

 Another example of a geodesic current arises from random simple closed curves; see \cite{Tina-McMullen} for details.

\paragraph{Intersection number.}  Given closed geodesics $\gamma_1$ and $\gamma_2$, their intersection number $i(\gamma_1,\gamma_2)$ is defined as the total number of transverse intersection points, counted with multiplicity. Here, multiplicity means that an intersection point $p$ is assigned a weight $m_p>1$ if $\gamma_1$ or $\gamma_2$ passes through $p$ more than once. By a \emph{simple intersection} we mean an intersection point $p$ at which each curve passes exactly once and the curves meet transversely at $p$. The multiplicity $m_p$ is defined as follows: by homotopically perturbing the curves in a sufficiently small neighborhood $U$ of $p$, one can arrange all intersections to be simple in $U$. The minimal number of simple intersection points within $U$ obtained in this way is defined as $m_p$.

The intersection number can be bounded above in terms of hyperbolic length. More precisely, for any $C_1, C_2 \in \mathcal{C}_g$, we have
\[
i(C_1,C_2) \le I(X)\,\ell_X(C_1)\,\ell_X(C_2),
\]
for a constant $I(X)$.
See~\cite{int-sys-T,Basmj} for further details. In particular, in Theorem~\ref{thm: main.entropy}, when $\ell_X(C)=1$, this estimate shows that $i(C,C)$ ranges between $0$ and $I(X)$.

  The variable $I(X)$ is called the \textit{interaction strength} of $X$ and is defined in \cite{int-sys-T} as
  $$
  I(X):=\sup \limits_{C_1,C_2 \in \Cg} \frac{i(C_1,C_2)}{\ell_X(C_1)\ell_X(C_2)}.
  $$
 We have $I(X)\leq 4/\sys(X)^2$ \cite[Prop.~2.4]{int-sys-T}. Moreover, for fixed genus $g>1$, we can see that $I(X) \to \infty$, as $\sys(X)\to 0$. 

\section{Counting closed geodesics}\label{sec: count}

In this section, we prove Theorem \ref{thm: P-closed-geod}.

\paragraph{Hexagon decomposition.}
Consider a pair of pants $S$ (a surface of genus $0$ with three boundaries), the boundaries of $S$ are called \textit{cuffs} of $S$. Assume that $S$ is equipped with a hyperbolic metric. The shortest arcs between different cuffs are called \textit{seams} of $S$, and they split $S$ into two isomorphic hexagons.

 Let $X$ be a compact surface of genus $g$. Let $P=\{p_1,\dots,p_{3g-3}\}$ be a maximal set of simple closed disjoint curves on $X$. They divided $X$ into $2g-2$ pairs of pants. A \emph{multi-curve} is a finite sum of pairwise disjoint or identical simple closed curves. Now consider a multi-curve $\beta$ that intersects each $p_i$ at two points and divides each pair of pants into two hexagons. The union of all $p_i$ and $\beta$ gives a hexagon decomposition of $X$ and split $X$ into hexagons $H_1,\dots,H_{4g-4}$.

 When $X$ is equipped with a hyperbolic metric, we know that there is a pants decomposition such that $\ell_X(p_i) \leq L_g$ for $i=1, \dots, 3g-3$, where $L_g$ is the \textit{Bers' constant}. Moreover, we have $L_g \leq 26(g-1)$ \cite[Thm. 5.1.2]{Bsr}. We call such a decomposition a \textit{proper pants decomposition} of $X$.

A \textit{proper hexagon decomposition} is constructed as follows. On each curve $p_i$, we mark two points that divide $p_i$ into two equal arcs. It is well known that, for a pair of pants, the endpoints of its seams on each cuff also split the cuff into two equal arcs.
Therefore, by applying suitable twists around the cuffs, each of magnitude less than a full twist, we may arrange that the endpoints of the seams become coincide with the marked points on the curves $p_i$. 

We then take the geodesic representatives of the resulting arcs connecting the marked points, by keeping the endpoints fixed on the cuffs. With this choice of twists, the resulting hexagons meet along full edges rather than partial segments. In particular, no vertex of one hexagon lies in the interior of an edge of another, and vertices can coincide only with other vertices.

This construction yields a piecewise geodesic curve $\beta$ and determines a \emph{proper hexagon decomposition} of $X$. 
Note that the curve $\beta$ needs not be connected.

\paragraph{Symbolic coding of closed curves.}

 Label the edges of the hexagons $H_i$ by $W=\{e_1,\dots,e_n\}$ for $n=24g-24$. Note that different labels may refer to the same arc on $X$ but from different hexagons.

Consider an oriented closed geodesic $\gamma$ with a marked point. Then we can assign a word $\omega(\gamma)$ to $\gamma$ with letters in $W$. The word presents the edges of the hexagons that $\gamma$ outgoing from them in order. See Figure \ref{fig: word}. 

If $\gamma$ passes through a vertex, we homotopically perturb $\gamma$ slightly so that it avoids the vertex, and then define $\omega(\gamma)$ using this perturbed curve. 
Thus, this notation involves a mild abuse: the word $\omega(\gamma)$ may not be uniquely determined. 
In such cases, we choose one representative arbitrarily.

Nevertheless, this ambiguity does not affect our arguments, since if $\omega(\gamma_1)=\omega(\gamma_2)$, then the curves $\gamma_1$ and $\gamma_2$ are homotopic which is good enough for our argument.

Define $\ell_{com}(\gamma)$, the combinatorial length of $\gamma$, as the number of letters of $\omega(\gamma)$.

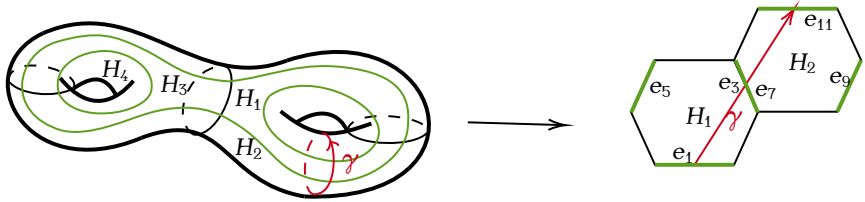
\begin{figure}[h]
    \centering

\tikzset{every picture/.style={line width=0.75pt}} 

\begin{tikzpicture}[x=0.75pt,y=0.75pt,yscale=-0.8,xscale=0.8]

\draw [line width=1.5]    (75.2,81.8) .. controls (135.2,55.8) and (171.2,120.8) .. (229,99) ;
\draw [line width=1.5]    (229,99) .. controls (327.2,66.8) and (363.2,188.8) .. (240.2,184.8) ;
\draw [line width=1.5]    (137.2,147.8) .. controls (191.2,127.8) and (187.2,175.8) .. (240.2,184.8) ;
\draw [line width=1.5]    (75.2,81.8) .. controls (26.2,104.8) and (63.2,178.8) .. (137.2,147.8) ;
\draw [line width=1.5]    (86,110) .. controls (105.2,133.8) and (122.2,123.8) .. (132.2,111.8) ;
\draw [line width=1.5]    (225,131) .. controls (249.2,150.8) and (261.2,145.8) .. (282.2,138.8) ;
\draw [line width=1.5]    (94.2,116.8) .. controls (103.2,110.8) and (113.2,105.8) .. (121.2,120.8) ;
\draw [line width=1.5]    (235,138) .. controls (251.2,127.8) and (262.2,132.8) .. (267.2,143.8) ;
\draw [color={rgb, 255:red, 0; green, 0; blue, 0 }  ,draw opacity=1 ]   (191,103) .. controls (200.2,109.8) and (187.2,146.8) .. (170.2,145.8) ;
\draw [color={rgb, 255:red, 0; green, 0; blue, 0 }  ,draw opacity=1 ] [dash pattern={on 4.5pt off 4.5pt}]  (170.2,145.8) .. controls (154.2,135.8) and (168.2,99.8) .. (191,103) ;
\draw [color={rgb, 255:red, 0; green, 0; blue, 0 }  ,draw opacity=1 ]   (53.2,111.8) .. controls (60.2,120.8) and (82.2,122.8) .. (94.2,116.8) ;
\draw [color={rgb, 255:red, 0; green, 0; blue, 0 }  ,draw opacity=1 ] [dash pattern={on 4.5pt off 4.5pt}]  (53.2,111.8) .. controls (55.2,92.8) and (101.2,105.8) .. (94.2,116.8) ;
\draw [color={rgb, 255:red, 0; green, 0; blue, 0 }  ,draw opacity=1 ]   (267.2,143.8) .. controls (274.2,152.8) and (308.2,153.8) .. (317.2,144.8) ;
\draw [color={rgb, 255:red, 0; green, 0; blue, 0 }  ,draw opacity=1 ] [dash pattern={on 4.5pt off 4.5pt}]  (267.2,143.8) .. controls (267.2,127.8) and (322.2,133.8) .. (317.2,144.8) ;
\draw [color={rgb, 255:red, 95; green, 159; blue, 31 }  ,draw opacity=1 ]   (85.27,120) .. controls (95.27,140) and (139.27,133) .. (143.2,118.8) .. controls (147.13,104.6) and (129.27,93) .. (114.27,93) .. controls (99.27,93) and (80.27,102) .. (85.27,120) -- cycle ;
\draw [color={rgb, 255:red, 95; green, 159; blue, 31 }  ,draw opacity=1 ]   (87.1,143.16) .. controls (105.2,147.33) and (128,136.11) .. (142.05,132.13) .. controls (156.1,128.16) and (172.92,126.58) .. (188.01,133.87) ;
\draw [color={rgb, 255:red, 95; green, 159; blue, 31 }  ,draw opacity=1 ]   (192.1,112.16) .. controls (162.01,109.87) and (156.08,100.27) .. (138.09,91.71) .. controls (120.1,83.16) and (110.09,79.71) .. (84.1,87.16) .. controls (58.11,94.61) and (50.2,132.33) .. (87.1,143.16) ;
\draw [color={rgb, 255:red, 95; green, 159; blue, 31 }  ,draw opacity=1 ]   (188.01,133.87) .. controls (221.01,150.87) and (218.89,168.4) .. (247.89,174.4) .. controls (276.89,180.4) and (302.62,160.04) .. (304.89,143.4) .. controls (307.17,126.76) and (304.72,109.59) .. (270.66,104.05) .. controls (236.6,98.51) and (219.85,109.48) .. (192.1,112.16) ;
\draw [color={rgb, 255:red, 95; green, 159; blue, 31 }  ,draw opacity=1 ]   (217.89,121.4) .. controls (197.89,147.4) and (266.89,180.4) .. (285,151) ;
\draw [color={rgb, 255:red, 95; green, 159; blue, 31 }  ,draw opacity=1 ]   (217.89,121.4) .. controls (243.89,102.4) and (297.89,129.4) .. (285,151) ;
\draw [color={rgb, 255:red, 208; green, 2; blue, 27 }  ,draw opacity=1 ]   (246.46,183.34) .. controls (262.46,183.34) and (260.46,145.34) .. (252,145) ;
\draw [color={rgb, 255:red, 208; green, 2; blue, 27 }  ,draw opacity=1 ] [dash pattern={on 4.5pt off 4.5pt}]  (246.46,183.34) .. controls (237.46,184.34) and (237.46,145.34) .. (252,145) ;
\draw    (343,138) -- (402.82,139.74) ;
\draw [shift={(404.82,139.8)}, rotate = 181.67] [color={rgb, 255:red, 0; green, 0; blue, 0 }  ][line width=0.75]    (10.93,-3.29) .. controls (6.95,-1.4) and (3.31,-0.3) .. (0,0) .. controls (3.31,0.3) and (6.95,1.4) .. (10.93,3.29)   ;
\draw   (446,131.8) -- (460.97,98.8) -- (510.85,98.8) -- (525.82,131.8) -- (510.85,164.8) -- (460.97,164.8) -- cycle ;
\draw   (510.85,98.8) -- (525.87,66.3) -- (575.93,66.3) -- (590.95,98.8) -- (575.93,131.3) -- (525.87,131.3) -- cycle ;
\draw [color={rgb, 255:red, 208; green, 2; blue, 27 }  ,draw opacity=1 ]   (485.91,164.8) -- (548.15,67.31) ;
\draw [shift={(549.23,65.62)}, rotate = 122.56] [color={rgb, 255:red, 208; green, 2; blue, 27 }  ,draw opacity=1 ][line width=0.75]    (10.93,-3.29) .. controls (6.95,-1.4) and (3.31,-0.3) .. (0,0) .. controls (3.31,0.3) and (6.95,1.4) .. (10.93,3.29)   ;
\draw  [color={rgb, 255:red, 208; green, 2; blue, 27 }  ,draw opacity=1 ][line width=0.75] [line join = round][line cap = round] (254.25,155.43) .. controls (255.49,155.43) and (256.25,151.7) .. (256.25,150.43) ;
\draw  [color={rgb, 255:red, 208; green, 2; blue, 27 }  ,draw opacity=1 ][line width=0.75] [line join = round][line cap = round] (256.25,150.43) .. controls (257.52,150.43) and (261.25,151.2) .. (261.25,152.43) ;
\draw [color={rgb, 255:red, 95; green, 159; blue, 31 }  ,draw opacity=1 ][line width=1.5]    (460.97,164.8) -- (510.85,164.8) ;
\draw [color={rgb, 255:red, 95; green, 159; blue, 31 }  ,draw opacity=1 ][line width=1.5]    (525.82,131.8) -- (511.8,98.8) ;
\draw [color={rgb, 255:red, 95; green, 159; blue, 31 }  ,draw opacity=1 ][line width=1.5]    (446,131.8) -- (460.97,98.8) ;
\draw [color={rgb, 255:red, 95; green, 159; blue, 31 }  ,draw opacity=1 ][line width=1.5]    (575.93,131.3) -- (590.95,98.8) ;
\draw [color={rgb, 255:red, 95; green, 159; blue, 31 }  ,draw opacity=1 ][line width=1.5]    (526.05,66.3) -- (575.93,66.3) ;

\draw (194.1,115.56) node [anchor=north west][inner sep=0.75pt]  [font=\footnotesize,color={rgb, 255:red, 0; green, 0; blue, 0 }  ,opacity=1 ]  {$H_{1}$};
\draw (194,146.4) node [anchor=north west][inner sep=0.75pt]  [font=\footnotesize,color={rgb, 255:red, 0; green, 0; blue, 0 }  ,opacity=1 ]  {$H_{2}$};
\draw (147,106.4) node [anchor=north west][inner sep=0.75pt]  [font=\footnotesize,color={rgb, 255:red, 0; green, 0; blue, 0 }  ,opacity=1 ]  {$H_{3}$};
\draw (110,97) node [anchor=north west][inner sep=0.75pt]  [font=\footnotesize,color={rgb, 255:red, 0; green, 0; blue, 0 }  ,opacity=1 ]  {$H_{4}$};
\draw (261,156.4) node [anchor=north west][inner sep=0.75pt]  [color={rgb, 255:red, 208; green, 2; blue, 27 }  ,opacity=1 ]  {$\gamma $};
\draw (478.1,126.56) node [anchor=north west][inner sep=0.75pt]  [font=\footnotesize,color={rgb, 255:red, 0; green, 0; blue, 0 }  ,opacity=1 ]  {$H_{1}$};
\draw (543.1,92.56) node [anchor=north west][inner sep=0.75pt]  [font=\footnotesize,color={rgb, 255:red, 0; green, 0; blue, 0 }  ,opacity=1 ]  {$H_{2}$};
\draw (470,152.4) node [anchor=north west][inner sep=0.75pt]  [font=\footnotesize]  {$e_{1}$};
\draw (499,108.4) node [anchor=north west][inner sep=0.75pt]  [font=\footnotesize]  {$e_{3}$};
\draw (455.48,110.7) node [anchor=north west][inner sep=0.75pt]  [font=\footnotesize]  {$e_{5}$};
\draw (522,112.4) node [anchor=north west][inner sep=0.75pt]  [font=\footnotesize]  {$e_{7}$};
\draw (569,104.4) node [anchor=north west][inner sep=0.75pt]  [font=\footnotesize]  {$e_{9}$};
\draw (553,68.4) node [anchor=north west][inner sep=0.75pt]  [font=\footnotesize]  {$e_{11}$};
\draw (122,144.4) node [anchor=north west][inner sep=0.75pt]    {$$};
\draw (503,129.4) node [anchor=north west][inner sep=0.75pt]  [color={rgb, 255:red, 208; green, 2; blue, 27 }  ,opacity=1 ]  {$\gamma $};

\end{tikzpicture}

    \caption{In this hexagon decompisition of a genus $2$ surface, $\omega(\gamma)=e_1e_7$ and $\ell_{com}(\gamma)=2$.}
    \label{fig: word}
\end{figure}

\begin{Lem}\label{lemma: com.length}
For any oriented closed geodesic $\gamma$ with a marked point, we have 
$$
\ell_{com}(\gamma)\leq c(X) \ell_X(\gamma),
$$
where $c(X)\leq I(X)(21g-21)L_g+12/\sys(X)$. Here, $I(X)$ and $L_g$ are the interaction strength and the Bers' constant, respectively.
\end{Lem}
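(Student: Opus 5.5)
We bound $\ell_{com}(\gamma)$, which is the number of times $\gamma$ crosses an edge of the proper hexagon decomposition, by splitting the crossings according to the type of edge. Every edge of a hexagon is either a subarc of some cuff $p_i$ or a subarc of the multi-curve $\beta$. So
$$
\ell_{com}(\gamma)\le \sum_{i=1}^{3g-3} i(\gamma,p_i) + \#(\gamma\cap\beta),
$$
up to the harmless perturbation near vertices; a crossing at a vertex is counted a bounded number of times, which is absorbed in the constants. The plan is to bound the crossings with the cuffs using the interaction strength, and the crossings with $\beta$ using the systole.

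For the cuffs, the definition of $I(X)$ gives $i(\gamma,p_i)\le I(X)\,\ell_X(\gamma)\,\ell_X(p_i)\le I(X)L_g\,\ell_X(\gamma)$, since the decomposition is proper. Summing over the $3g-3$ cuffs gives a contribution of $(3g-3)I(X)L_g\,\ell_X(\gamma)$.

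The arcs of $\beta$ are not closed geodesics, so they need a different argument. Each hexagon edge lying on $\beta$ is a geodesic arc inside a pair of pants joining two cuffs. Its length is controlled by the cuff lengths. The approach I would try first is to replace each seam arc by a closed curve of controlled length: the seam together with adjacent cuff arcs gives a closed curve $\delta$ with $\ell_X(\delta)\le 2(\text{seam})+\text{(cuff arcs)}$. Any crossing of $\gamma$ with the seam is then a crossing with $\delta$ or is absorbed by crossings with the cuffs. The difficulty is that seam lengths are not bounded above by $L_g$ when cuffs are short; a seam between short cuffs is long, of order $\log(1/\ell(p_i))$.

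This is where the $12/\sys(X)$ term should enter, and it is the main obstacle. Between two consecutive crossings with cuffs, $\gamma$ is a geodesic arc inside a single pair of pants, that is, inside two hexagons. The key claim is that such an arc crosses the seams at most a bounded number of times plus roughly $6\,\ell/\sys(X)$-type terms. Here the repeated crossings come from the arc spiraling around a cuff, and each full turn around a cuff costs length at least $\sys(X)$. So I would argue as follows. A geodesic arc in a pants whose endpoints lie on cuffs, with no interior cuff crossing, winds around cuffs some number of times. Each winding crosses a bounded number of seam edges, at most $6$ with both orientations of edges, hence $12$ labels. Each winding also has length at least $\sys(X)$ since it is homotopic to a cuff multiple. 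Non-winding passages contribute a bounded number of seam crossings per cuff crossing. This gives roughly
$$
\#(\gamma\cap\beta)\le 6\sum_i i(\gamma,p_i)+\frac{12}{\sys(X)}\ell_X(\gamma).
$$
Combining this with the cuff estimate yields $c(X)\le I(X)(3g-3)(1+6)L_g+12/\sys(X)=I(X)(21g-21)L_g+12/\sys(X)$, which matches the stated constant.

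The delicate points are the bookkeeping of how many seam crossings occur per cuff crossing, at most $6$, and the justification that each extra winding has length $\ge\sys(X)$. The latter I would prove by closing the winding subarc with a short segment on the seam to obtain a curve freely homotopic to a power of a cuff. The case of a closed geodesic $\gamma$ that crosses no cuff at all, namely $\gamma$ equal to some $p_i$, is handled directly by the $12/\sys(X)$ term.
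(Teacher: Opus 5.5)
\paragraph{Review.} Your top-level split is the same as the paper's. The cuff crossings satisfy $i(\gamma,p)\le I(X)(3g-3)L_g\,\ell_X(\gamma)$, and your target inequality $\#(\gamma\cap\beta)\le 6\,i(\gamma,p)+12\,\ell_X(\gamma)/\sys(X)$ is exactly equivalent to the paper's $\ell_{com}(\gamma)\le 7\,i(\gamma,p)+12\,\ell_X(\gamma)/\sys(X)$. But that inequality is the whole content of the lemma, and your argument for it has a genuine gap.
\begin{itemize}
\item A ``winding'' is never defined.
\item The bound of $6$ crossings per cuff crossing for ``non-winding passages'' is only asserted.
\item The $12$ is reverse-engineered. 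Each crossing contributes exactly one letter to $\omega(\gamma)$, and going once around a cuff crosses only the two $\beta$ edges at that cuff. So ``both orientations, hence $12$ labels'' corresponds to no actual mechanism.
\item Your proposed proof that a winding has length $\ge\sys(X)$ fails. Closing a subarc with a segment of a $\beta$ edge only shows that the length of the subarc plus the length of the segment is at least $\sys(X)$. That says nothing about the subarc, since, as you note yourself, these edges can be long.
\item The edges of $\beta$ are not the orthogonal seams but twisted copies of them; the construction allows twists up to almost a full turn. So an arc that leaves a $\beta$ edge and returns to it after going once around a cuff can be much shorter than that cuff. Measuring length between crossings of $\beta$ therefore cannot give the systole bound.
\item The closed geodesics crossing no cuff are not only the $p_i$. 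Figure-eight type geodesics inside one pair of pants also exist and need the same systole argument.
\end{itemize}

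\paragraph{The missing idea.} Measure length between crossings of the true seams, not of $\beta$. The seams of a pair of pants $D$ cut it into two right-angled hexagons $K_1,K_2$. Each of the three $\beta$ edges in $D$ meets each $K_j$ in at most one segment, which gives six pieces.

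Take seven consecutive crossings of $\gamma$ that all lie on $\beta$. Then $\gamma$ stays in one $D$ between them, so by pigeonhole two crossings lie on the same piece, say in $K_1$. A geodesic cannot meet a geodesic segment twice inside the convex hexagon $K_1$ (no bigons). Hence the subarc between the two crossings must traverse $K_2$ from one seam to a different seam. Any such traverse has length $\ge\sys(X)/2$, because two seams adjacent to a half-cuff have that half-cuff as their common perpendicular.

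Now count windows of seven consecutive crossings. At most $7\,i(\gamma,p)$ windows contain a cuff crossing. Each subarc lies in at most six windows, so at most $6\,\ell_X(\gamma)/(\sys(X)/2)$ windows do not. This is where both the $7$ and the $12$ come from.

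Your per-arc bookkeeping can be rescued with the same two facts. Between consecutive seam crossings, $\gamma$ stays in one $K_j$ and so crosses at most three $\beta$ pieces. On a cuff-to-cuff arc, each subarc between consecutive seam crossings joins distinct seams and so has length $\ge\sys(X)/2$.
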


\begin{proof}

As explained above, we may assume that $\gamma$ does not pass through any vertex of the hexagons.
The intersection points of $\gamma$ with the edges of the hexagons divide $\gamma$ into $\ell_{\mathrm{com}}(\gamma)$ geodesic subarcs.
Each such subarc either has at least one endpoint on a cuff, or both of its endpoints lie on $\beta$.

Our goal is to obtain an upper bound on the number of these intersection points. Consider a block of subarcs corresponding to each $7$ consecutive intersection points along $\gamma$ (including $6$ consecutive subarcs).
Either at least one of these points lies on a cuff, or all seven points lie on $\beta$.
The number of the blocks of the first type is bounded above by $7i(\gamma,p)$.

Now consider a subarc $\alpha$ of the second type.
Since $\alpha$ does not intersect any cuff, it must be entirely contained in a single pair of pants $D$.
Let $K_1$ and $K_2$ be the two hexagons in $D$ determined by the seams of $D$.
Observe that any edge $e_i$ contained in $D$ has at most one connected component in each of $K_1$ and $K_2$. There are three edges $e_i$ in the interior of $D$. Among the seven intersection points of $\alpha$ with these edges, there must exist two points, say $q_1$ and $q_2$, lying on the same edge $e_i$ within the same hexagon $K_j$. Without loss of generality, assume $j=1$; see Figure~\ref{fig: seam}.

Since two geodesic arcs inside a hexagon can intersect at most once (otherwise a bigon would be formed), the subarc of $\alpha$ between $q_1$ and $q_2$ must start in $K_1$, pass through $K_2$, and return to $K_1$.
In particular, the portion of $\alpha$ inside $K_2$ connects two distinct seams of $D$, and hence has length at least $\sys(X)/2$. See Figure \ref{fig: seam}.
It follows that the number of subarcs of this second type is bounded above by $6\ell_X(\gamma)/(\sys(X)/2)$.

\begin{figure}[h]
    \centering

\tikzset{every picture/.style={line width=0.75pt}} 

\begin{tikzpicture}[x=0.75pt,y=0.75pt,yscale=-0.9,xscale=0.9]

\draw   (300,92.9) .. controls (300,80.69) and (316.83,70.8) .. (337.6,70.8) .. controls (358.37,70.8) and (375.2,80.69) .. (375.2,92.9) .. controls (375.2,105.11) and (358.37,115) .. (337.6,115) .. controls (316.83,115) and (300,105.11) .. (300,92.9) -- cycle ;
\draw   (163,93) .. controls (163,81.95) and (178.67,73) .. (198,73) .. controls (217.33,73) and (233,81.95) .. (233,93) .. controls (233,104.05) and (217.33,113) .. (198,113) .. controls (178.67,113) and (163,104.05) .. (163,93) -- cycle ;
\draw   (234,193) .. controls (234,181.95) and (249.67,173) .. (269,173) .. controls (288.33,173) and (304,181.95) .. (304,193) .. controls (304,204.05) and (288.33,213) .. (269,213) .. controls (249.67,213) and (234,204.05) .. (234,193) -- cycle ;
\draw    (168.2,103.8) .. controls (206.2,128.8) and (227.2,160.8) .. (234,193) ;
\draw    (233,93) .. controls (243.2,148.8) and (296.2,124.8) .. (300,92.9) ;
\draw    (304,193) .. controls (298.2,159.8) and (330.2,137.8) .. (368,106) ;
\draw [color={rgb, 255:red, 208; green, 2; blue, 27 }  ,draw opacity=1 ]   (218.2,151.8) .. controls (231.2,166.8) and (255.2,158.8) .. (269,173) ;
\draw [color={rgb, 255:red, 208; green, 2; blue, 27 }  ,draw opacity=1 ] [dash pattern={on 4.5pt off 4.5pt}]  (218.2,151.8) .. controls (239.2,123.8) and (206.2,109.8) .. (198,73) ;
\draw [color={rgb, 255:red, 95; green, 159; blue, 31 }  ,draw opacity=1 ]   (255.2,165.8) .. controls (269.2,160.8) and (275.2,131.8) .. (264.2,127.8) ;
\draw [color={rgb, 255:red, 95; green, 159; blue, 31 }  ,draw opacity=1 ] [dash pattern={on 4.5pt off 4.5pt}]  (229.2,171.8) .. controls (220.2,166.8) and (241.2,122.8) .. (264.2,127.8) ;
\draw [color={rgb, 255:red, 95; green, 159; blue, 31 }  ,draw opacity=1 ]   (229.2,171.8) .. controls (234.2,177.8) and (243.2,171.8) .. (246.2,162.8) ;
\draw  [line width=3] [line join = round][line cap = round] (256.2,163.8) .. controls (256.2,164.13) and (256.2,164.47) .. (256.2,164.8) ;
\draw  [line width=3] [line join = round][line cap = round] (244.2,163.8) .. controls (244.67,163.8) and (245.2,164.33) .. (245.2,164.8) ;

\draw (262,156.4) node [anchor=north west][inner sep=0.75pt]  [font=\scriptsize]  {$q_{1}$};
\draw (272,131.4) node [anchor=north west][inner sep=0.75pt]  [font=\small]  {$\alpha $};
\draw (325,160) node [anchor=north west][inner sep=0.75pt]  [font=\small]  {$D$};
\draw (237,146.4) node [anchor=north west][inner sep=0.75pt]  [font=\scriptsize]  {$q_{2}$};
\draw (208,116.4) node [anchor=north west][inner sep=0.75pt]  [font=\small]  {$e_i$};

\end{tikzpicture}

    \caption{The subarc $\alpha$ has two intersection points $q_1,q_2$ on an edge $e_i$ inside a hexagon obtained from the seams of the pair of pants, this arc has length $\geq \sys(X)/2$.}
    \label{fig: seam}
\end{figure}
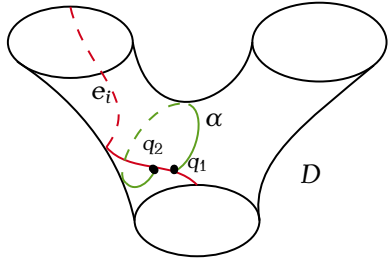

Combining the two cases, we obtain
\[
\ell_{\mathrm{com}}(\gamma)
\leq 7\, i(\gamma,p) + \frac{12\,\ell_X(\gamma)}{\sys(X)}.
\]
Using the intersection bound $i(\gamma,p) \leq I(X)\,\ell_X(\gamma)\,\ell_X(p)$ (see \S \ref{sec: geod}), we conclude that
\[
\ell_{\mathrm{com}}(\gamma)
\leq 7\, I(X)\,\ell_X(\gamma)\,\ell_X(p) + \frac{12\,\ell_X(\gamma)}{\sys(X)}
\leq I(X)\,\ell_X(\gamma)\,(21g-21)\,L_g + \frac{12\,\ell_X(\gamma)}{\sys(X)},
\]
where $I(X)$ denotes the intersection strength (see \S\ref{sec: geod}) and $L_g$ is the Bers' constant.

\end{proof}

See Remark \ref{remark: bc.bound} for a bound on $c(X)$.
\paragraph{Intersection points.}

Let $\alpha$ and $\beta$ be simple oriented geodesic arcs in hexagon $H_i$ whose endpoints are on the edges of $H_i$ (but not on the vertices). Assume that $\alpha,\beta$ intersect inside $H_i$. The configuration of $\alpha,\beta$ in $H_i$, based on their endpoints, is of one of the following types (see Figure \ref{fig: int.type}):
\begin{itemize}
\item \emph{Type 1.} Neither their starting points nor their terminal points are on the same edge.
\item \emph{Type 2 (a).} Their starting points are on the same edge but their terminal points are not. 
\item \emph{Type 2 (b).} Their terminal points are on the same edge but their starting points are not.
\item \emph{Type 3.} Their initial points lie on the same edge, and their terminal points also lie on the same edge.

\end{itemize}

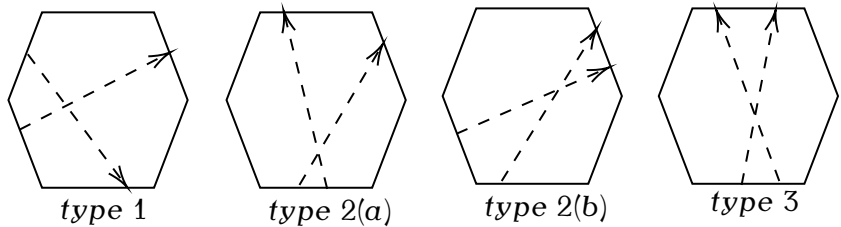
\begin{figure}[h]
    \centering

\tikzset{every picture/.style={line width=0.75pt}} 

\begin{tikzpicture}[x=0.75pt,y=0.75pt,yscale=-1,xscale=1]

\draw   (100,167.27) -- (116.95,123) -- (173.45,123) -- (190.4,167.27) -- (173.45,211.53) -- (116.95,211.53) -- cycle ;
\draw   (210,167.27) -- (226.95,123) -- (283.45,123) -- (300.4,167.27) -- (283.45,211.53) -- (226.95,211.53) -- cycle ;
\draw   (319,165.27) -- (335.95,121) -- (392.45,121) -- (409.4,165.27) -- (392.45,209.53) -- (335.95,209.53) -- cycle ;
\draw   (428,165.27) -- (444.95,121) -- (501.45,121) -- (518.4,165.27) -- (501.45,209.53) -- (444.95,209.53) -- cycle ;
\draw  [dash pattern={on 4.5pt off 4.5pt}]  (109.78,144.13) -- (158.58,209.53) ;
\draw [shift={(159.78,211.13)}, rotate = 233.27] [color={rgb, 255:red, 0; green, 0; blue, 0 }  ][line width=0.75]    (10.93,-3.29) .. controls (6.95,-1.4) and (3.31,-0.3) .. (0,0) .. controls (3.31,0.3) and (6.95,1.4) .. (10.93,3.29)   ;
\draw  [dash pattern={on 4.5pt off 4.5pt}]  (105.78,182.13) -- (180,144.05) ;
\draw [shift={(181.78,143.13)}, rotate = 152.84] [color={rgb, 255:red, 0; green, 0; blue, 0 }  ][line width=0.75]    (10.93,-3.29) .. controls (6.95,-1.4) and (3.31,-0.3) .. (0,0) .. controls (3.31,0.3) and (6.95,1.4) .. (10.93,3.29)   ;
\draw  [dash pattern={on 4.5pt off 4.5pt}]  (246.78,210.13) -- (287.95,140.5) ;
\draw [shift={(288.97,138.77)}, rotate = 120.59] [color={rgb, 255:red, 0; green, 0; blue, 0 }  ][line width=0.75]    (10.93,-3.29) .. controls (6.95,-1.4) and (3.31,-0.3) .. (0,0) .. controls (3.31,0.3) and (6.95,1.4) .. (10.93,3.29)   ;
\draw  [dash pattern={on 4.5pt off 4.5pt}]  (260.78,212.13) -- (239.25,124.08) ;
\draw [shift={(238.78,122.13)}, rotate = 76.26] [color={rgb, 255:red, 0; green, 0; blue, 0 }  ][line width=0.75]    (10.93,-3.29) .. controls (6.95,-1.4) and (3.31,-0.3) .. (0,0) .. controls (3.31,0.3) and (6.95,1.4) .. (10.93,3.29)   ;
\draw  [dash pattern={on 4.5pt off 4.5pt}]  (326,184.27) -- (401.13,151.57) ;
\draw [shift={(402.97,150.77)}, rotate = 156.48] [color={rgb, 255:red, 0; green, 0; blue, 0 }  ][line width=0.75]    (10.93,-3.29) .. controls (6.95,-1.4) and (3.31,-0.3) .. (0,0) .. controls (3.31,0.3) and (6.95,1.4) .. (10.93,3.29)   ;
\draw  [dash pattern={on 4.5pt off 4.5pt}]  (348.78,208.13) -- (395.91,132.47) ;
\draw [shift={(396.97,130.77)}, rotate = 121.92] [color={rgb, 255:red, 0; green, 0; blue, 0 }  ][line width=0.75]    (10.93,-3.29) .. controls (6.95,-1.4) and (3.31,-0.3) .. (0,0) .. controls (3.31,0.3) and (6.95,1.4) .. (10.93,3.29)   ;
\draw  [dash pattern={on 4.5pt off 4.5pt}]  (469.78,210.13) -- (486.58,123.74) ;
\draw [shift={(486.97,121.77)}, rotate = 101.01] [color={rgb, 255:red, 0; green, 0; blue, 0 }  ][line width=0.75]    (10.93,-3.29) .. controls (6.95,-1.4) and (3.31,-0.3) .. (0,0) .. controls (3.31,0.3) and (6.95,1.4) .. (10.93,3.29)   ;
\draw  [dash pattern={on 4.5pt off 4.5pt}]  (488.78,208.13) -- (457.66,123.65) ;
\draw [shift={(456.97,121.77)}, rotate = 69.78] [color={rgb, 255:red, 0; green, 0; blue, 0 }  ][line width=0.75]    (10.93,-3.29) .. controls (6.95,-1.4) and (3.31,-0.3) .. (0,0) .. controls (3.31,0.3) and (6.95,1.4) .. (10.93,3.29)   ;

\draw (124,215.4) node [anchor=north west][inner sep=0.75pt]    {$type\ 1$};
\draw (228.95,214.93) node [anchor=north west][inner sep=0.75pt]    {$type\ 2( a)$};
\draw (337.95,212.93) node [anchor=north west][inner sep=0.75pt]    {$type\ 2( b)$};
\draw (452,211.4) node [anchor=north west][inner sep=0.75pt]    {$type\ 3$};

\end{tikzpicture}

    \caption{Different types of intersection between two arcs}
    \label{fig: int.type}
\end{figure}

Let $\gamma_1$ and $\gamma_2$ be oriented closed curves on $X$ that do not pass through the vertices of the hexagons.
We say that $\gamma_1$ and $\gamma_2$ are \emph{linearly homotopic} if there exists a homotopy $h$ between them such that points lying on an edge are moved only along that same edge throughout the homotopy.
Equivalently, if $\alpha$ is a subarc of $\gamma_1$ with endpoints on edges $b$ and $b'$, then $h$ moves the endpoints of $\alpha$ along $b$ and $b'$, and deforms $\alpha$ into a subarc connecting the same pair of edges.

We say a closed curve is \textit{proper} if it does not pass through the vertices of the hexagos and its subarcs inside hexagons are geodesics.
The key step of the proof is the following theorem:

\begin{Thm}\label{thm: linear.homotop}
Let $\gamma$ be a proper oriented closed curve.
Then there exists a proper oriented closed curve $\phi(\gamma)$, linearly homotopic to $\gamma$, with no intersections of type $2(a)$ or type $3$.
Moreover, the number of intersection points of type $2$ for $\phi(\gamma)$ is not greater than that for $\gamma$.

\end{Thm}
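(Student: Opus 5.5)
We treat a linear homotopy as a change of the points where $\gamma$ crosses each edge. Since the arcs inside each hexagon are geodesic, the curve is determined by the combinatorial word $\omega(\gamma)$ together with the ordered positions of the crossing points on every edge. Fix such a word. A proper curve linearly homotopic to $\gamma$ then amounts to a choice, for each edge $e$, of a linear order of the crossing points of $\gamma$ on $e$, realized by actual points. The two sides of an edge must see the same order, reversed according to the orientation of $e$ as seen from the two adjacent hexagons. Two geodesic arcs in a hexagon, with endpoints on given edges, intersect iff their endpoints are interleaved along the boundary circle of the hexagon. So the number and types of intersections depend only on these orders. The problem is therefore purely combinatorial: choose orders on edges to kill all type $2(a)$ and type $3$ intersections without increasing type $2$ intersections.

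\textbf{Step 1 (local move).} Suppose there is an intersection of type $2(a)$ or $3$ in a hexagon $H$, between arcs $a_1,a_2$ starting at consecutive-in-conflict points $x_1,x_2$ on the same edge $e$. Because their starting points lie on the same edge, swapping the order of $x_1,x_2$ on $e$ (sliding them past each other along $e$) removes the crossing in $H$. We want to choose the pair to be adjacent on $e$, which is possible: if an interleaving pair exists, then some adjacent pair on $e$ is also interleaved. We check this by the standard argument that a permutation which is not sorted has an adjacent inversion. This swap also changes the configuration in the neighboring hexagon $H'$ across $e$. There the arcs $a_1',a_2'$ of $\gamma$ ending at $x_1,x_2$ now have swapped terminal points on the same edge $e$. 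Their crossing status in $H'$ flips: an intersection of type $2(b)$ or $3$ in $H'$ is created or destroyed. Nothing else changes.

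\textbf{Step 2 (the better normalization).} Local swaps can cycle, so rather than swapping we define the orders directly. Within each hexagon, call a pair of arcs sharing a starting edge \emph{well-ordered} if their starting points are ordered compatibly with their terminal points along the boundary. Equivalently, sort outgoing points on each edge by the position of the terminal point going around $\partial H$ from $e$. For arcs with the same terminal edge, the tie is broken by the order already determined on that terminal edge. This gives a recursive definition along the orbit of $\gamma$: the order of outgoing points on $e$ is lexicographic in the future words. That is, compare the sequences of edges the two strands visit next, as in the standard ordering of strands of a train-track-like coding. Since $\gamma$ is a closed curve, distinct strands through $e$ have distinct futures unless $\gamma$ is a proper multiple. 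In that case the strands are parallel copies, and we keep the original order, which does not affect intersection counts. With this order on every edge, two arcs starting on the same edge never cross: if their terminal edges differ, the boundary order separates them; if their terminal edges coincide, the lexicographic tie-break on the terminal edge is consistent. Hence there are no crossings of type $2(a)$ or $3$, and the resulting curve is proper and linearly homotopic to $\gamma$.

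\textbf{Step 3 (counting).} We must show that type $2$ intersections do not increase; after Step 2 these are type $2(b)$ only. The plan is to compare pairs of strands. For two strands that travel together along a common maximal block of edges and then split, the original curve must cross at least once for each place where the sorted curve crosses. This is the main obstacle. The difficulty is that type-$2$ crossings of $\gamma$ in a fellow-travelling block can be traded between hexagons, so we need a parity/monotonicity argument. Along a common block, the relative order of the two strands on successive edges is forced up to the crossings, and each change of relative order between consecutive edges costs exactly one crossing of type $2$ or $3$ in the intervening hexagon. The sorted curve realizes the minimum number of order changes, namely at most one at the splitting end, which is of type $2(b)$. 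The original curve has some relative order at the splitting end. If that differs from the order required at the end of the block, it needs at least one type $2$ crossing there or earlier. Summing over pairs of strands and their common blocks gives the inequality.
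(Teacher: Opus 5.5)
Your Step 2 is the paper's construction. Its \emph{proper ordering} sorts the outgoing points on each edge by their forward endpoints $f^{+}(p)\in\partial\mathbb{D}$, which is equivalent to sorting lexicographically by the future word. It excludes type $2(a)$ and type $3$ crossings by the same two observations you make. Step 3 is also the paper's counting idea: if the entry and exit edges of a maximal common block are linked, every curve with the same word must cross inside the block. Your block-by-block summation makes explicit the injectivity that the paper leaves implicit.

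The gap is in the conclusion of Step 3. The crossing that $\gamma$ is forced to have inside a linked block need not be of type $2$. It may lie in an intermediate hexagon of the block, where both arcs run from one common edge to the next, so it may be of type $3$; your own phrase ``type 2 or 3'' one sentence earlier allows this. For instance, suppose that in the sorted curve two linked strands are adjacent on the first common edge of a block with at least two common edges. Exchanging them on that edge moves their crossing from the merging hexagon (type $2(b)$) into the next hexagon (type $3$) without changing the word. The resulting curve $\gamma'$ then has one type $2$ crossing fewer than $\phi(\gamma')$. So the block argument yields only
\[
\#\{\text{type } 2 \text{ crossings of } \phi(\gamma)\}\ \le\ \#\{\text{type } 2 \text{ or type } 3 \text{ crossings of } \gamma\}\ \le\ i(\gamma,\gamma),
\]
where the last inequality holds for a closed geodesic $\gamma$. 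It does not yield the ``Moreover'' clause as stated. The weaker inequality is exactly what the proof of Theorem~\ref{thm: P-closed-geod} uses. It is also all that the paper's own argument proves: the paper shows that each type $2(b)$ crossing of $\phi(\gamma)$ produces \emph{some} crossing of $\gamma$, and then asserts that type $2$ crossings do not increase. Note also that the unique crossing of the sorted curve in a block lies at the merging end, which is why it has type $2(b)$. At the splitting end it would have type $2(a)$.

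Finally, your treatment of proper powers does not work, and no treatment can. Suppose $\omega(\gamma)=w^k$ with $k\ge 2$. Following the $k$ parallel strands through $|w|$ steps permutes their points on an edge cyclically. Without type $3$ crossings, their relative order would be preserved at every step. A nontrivial cyclic permutation preserves no linear order, so every proper curve with this word has a type $3$ crossing, and ``keeping the original order'' does not remove it. The theorem therefore needs $\gamma$ to be primitive. The paper assumes this tacitly, since its ordering is undefined when two strands have the same future. The restriction is harmless for the counting.
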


\begin{proof}

We construct $\phi(\gamma)$ to be linearly homotopic to $\gamma$ as follows. 
Fix a hexagon $H_j$ and an edge $e_i \subset H_j$, and consider the arcs of $\gamma$ exiting $H_j$ through $e_i$. 
Assume that these outgoing arcs intersect $e_i$ at $n$ distinct points. 
Among all permutations of these $n$ intersection points, we will specify a distinguished one, called the \emph{proper ordering}. 
Our goal is to choose this ordering so that, if the intersection points on every edge are arranged according to the proper ordering, then no intersections of types $2(a)$ and $3$ occur. Moreover, we can assume that the subarcs of $\phi(\gamma)$ in each hexagon are geodesics.

The orientation of $X$ induces an orientation on each hexagon and on each of its edges. For each hexagon $H_j$, this, in turn, determines a cyclic ordering of its edges. After removing an edge $e_i \subset H_j$, this cyclic ordering induces a linear ordering on the remaining edges.

Note that when two hexagons share an edge, the orientations they induce on that edge are opposite. Therefore, it is necessary to specify a choice of hexagon whenever we refer to the orientation of an edge. 
Throughout this discussion, we adopt the convention that, for any intersection point with an edge, the relevant orientation is the one induced on that edge by the hexagon containing the forward arc issuing from the intersection point.

\textit{The proper ordering} is defined as follows. Let $\mathbb{D}$ denote the hyperbolic disk, identified with the universal cover of $X$. 
Let $\widetilde{H}_j$ be a lift of $H_j$ to $\mathbb{D}$, and $\widetilde{e}_i$ be the corresponding lift of $e_i$ contained in $\widetilde{H}_j$. 
For an intersection point $p \in \widetilde{e}_i \cap \widetilde{\gamma}$, where $\widetilde{\gamma}$ is a lift of $\gamma$, define $f^{+}(p)$ to be the endpoint of $\widetilde{\gamma}$ on $\partial \mathbb{D}$ in the forward direction.

Note that if $\widetilde{\gamma}_1$ and $\widetilde{\gamma}_2$ are lifts of two linearly homotopic curves, then for the corresponding intersection points
$p_1 = \widetilde{e}_i \cap \widetilde{\gamma}_1$ and 
$p_2 = \widetilde{e}_i \cap \widetilde{\gamma}_2$, we have
\[
f^{+}(p_1) = f^{+}(p_2).
\]

The geodesic $\widetilde{e}_i$ divides $\partial \mathbb{D}$ into two intervals. 
For the intersection points $p_1,\dots,p_n$ arising from outgoing arcs of $\gamma$ on $e_i$, the points
$f^{+}(p_1),\dots,f^{+}(p_n)$ all lie on the same boundary interval, namely the one corresponding to the side on which $\widetilde{H}_j$ lies (see Figure~\ref{fig: universal-hexagon}). We define the \emph{proper ordering} of the points $p_1,\dots,p_n$ on $e_i$ to be the one that is compatible with the order of
$f^{+}(p_1),\dots,f^{+}(p_n)$ along this boundary interval of $\partial \mathbb{D}$. 
Equivalently, the permutation of the intersection points on $e_i$ that agrees with the order of their forward endpoints on $\partial \mathbb{D}$ is defined as \textit{the proper ordering}; see Figure~\ref{fig: universal-hexagon}.

\begin{figure}[h]
    \centering

\tikzset{every picture/.style={line width=0.75pt}} 

\begin{tikzpicture}[x=0.75pt,y=0.75pt,yscale=-0.9,xscale=0.9]

\draw   (95,178.9) .. controls (95,89.93) and (167.13,17.8) .. (256.1,17.8) .. controls (345.07,17.8) and (417.2,89.93) .. (417.2,178.9) .. controls (417.2,267.87) and (345.07,340) .. (256.1,340) .. controls (167.13,340) and (95,267.87) .. (95,178.9) -- cycle ;
\draw  [draw opacity=0] (167.69,312.58) .. controls (167.55,310.8) and (167.49,309.01) .. (167.5,307.2) .. controls (167.81,265.79) and (208.66,232.53) .. (258.74,232.9) .. controls (308.82,233.28) and (349.17,267.15) .. (348.86,308.56) .. controls (348.86,308.91) and (348.85,309.26) .. (348.84,309.61) -- (258.18,307.88) -- cycle ; \draw   (167.69,312.58) .. controls (167.55,310.8) and (167.49,309.01) .. (167.5,307.2) .. controls (167.81,265.79) and (208.66,232.53) .. (258.74,232.9) .. controls (308.82,233.28) and (349.17,267.15) .. (348.86,308.56) .. controls (348.86,308.91) and (348.85,309.26) .. (348.84,309.61) ;  
\draw    (269,233) .. controls (268.18,217.91) and (275.18,205.91) .. (288,197) ;
\draw    (177.2,199.6) .. controls (203.2,202.6) and (217.2,215.6) .. (227.2,236.6) ;
\draw    (208,210) .. controls (219.18,200.91) and (227.2,188.6) .. (229.2,176.6) ;
\draw    (265.18,171.91) .. controls (273.18,184.91) and (279.18,190.91) .. (288,197) ;
\draw    (229.2,176.6) .. controls (242.18,180.91) and (253.18,181.91) .. (265.18,171.91) ;
\draw    (265.18,171.91) .. controls (270.18,161.91) and (273.18,163.91) .. (274.18,145.91) ;
\draw    (288,197) .. controls (298.18,189.91) and (306.18,185.91) .. (318.18,184.91) ;
\draw    (274.18,145.91) .. controls (289.18,147.91) and (294.18,143.91) .. (301.18,134.91) ;
\draw    (318.18,184.91) .. controls (314.18,175.91) and (315.18,168.91) .. (323.18,155.91) ;
\draw    (301.18,134.91) .. controls (303.18,142.91) and (312.18,151.91) .. (323.18,155.91) ;
\draw    (214.18,154.91) .. controls (215.18,162.91) and (219.18,169.91) .. (229.2,176.6) ;
\draw    (186.18,148.91) .. controls (194.18,153.91) and (200.18,157.91) .. (214.18,154.91) ;
\draw    (177.2,199.6) .. controls (180.18,191.91) and (180.18,185.91) .. (174.18,178.91) ;
\draw    (174.18,178.91) .. controls (182.18,174.91) and (190.18,163.91) .. (186.18,148.91) ;
\draw    (173.18,130.91) .. controls (172.18,141.91) and (181.18,146.91) .. (186.18,148.91) ;
\draw    (147.18,132.91) .. controls (156.18,134.91) and (161.18,134.91) .. (173.18,130.91) ;
\draw    (152.18,170.91) .. controls (164.18,172.91) and (167.18,173.91) .. (174.18,178.91) ;
\draw    (139.18,147.91) .. controls (147.18,151.91) and (153.18,159.91) .. (152.18,170.91) ;
\draw    (139.18,147.91) .. controls (145.18,145.91) and (148.18,138.91) .. (147.18,132.91) ;
\draw    (252.18,127.91) .. controls (262.18,132.91) and (269.18,137.91) .. (274.18,145.91) ;
\draw    (307.18,103.91) .. controls (296.18,115.91) and (299.18,126.91) .. (301.18,134.91) ;
\draw    (252.18,127.91) .. controls (259.18,118.91) and (259.18,111.91) .. (257.18,99.91) ;
\draw    (257.18,99.91) .. controls (272.18,98.91) and (280.18,94.91) .. (289.18,87.91) ;
\draw    (289.18,87.91) .. controls (293.18,94.91) and (300.18,102.91) .. (307.18,103.91) ;
\draw    (173.18,130.91) .. controls (172.18,121.91) and (175.18,113.91) .. (183.18,110.91) ;
\draw    (147.18,132.91) .. controls (147.18,124.91) and (144.18,118.91) .. (142.18,109.91) ;
\draw    (142.18,109.91) .. controls (148.18,105.91) and (152.18,99.91) .. (156.18,92.91) ;
\draw    (183.18,110.91) .. controls (181.18,106.91) and (178.18,101.91) .. (180.18,95.91) ;
\draw    (156.18,92.91) .. controls (164.18,95.91) and (168.18,98.91) .. (180.18,95.91) ;
\draw    (137.13,124.59) -- (147.18,132.91) ;
\draw    (129.13,148.59) -- (139.18,147.91) ;
\draw    (124.13,126.59) -- (137.13,124.59) ;
\draw    (120.13,139.59) .. controls (126.4,141.9) and (126.4,143.9) .. (129.13,148.59) ;
\draw    (120.13,139.59) .. controls (124.4,135.9) and (127.4,131.9) .. (124.13,126.59) ;
\draw    (285.2,59.86) .. controls (285.2,69.86) and (284.2,77.86) .. (289.18,87.91) ;
\draw    (242.68,80.43) .. controls (249.68,87.43) and (253.2,84.86) .. (257.18,99.91) ;
\draw    (265.68,53.43) .. controls (271.68,60.43) and (276.68,59.43) .. (285.2,59.86) ;
\draw    (242.68,80.43) .. controls (247.85,76.39) and (247.85,69.39) .. (247.85,61.39) ;
\draw    (247.85,61.39) .. controls (254.85,61.39) and (259.85,60.39) .. (265.68,53.43) ;
\draw    (289.18,87.91) .. controls (298.21,83.79) and (303.21,78.79) .. (305.21,70.79) ;
\draw    (307.18,103.91) .. controls (319.35,95.31) and (316.35,97.31) .. (328.35,93.31) ;
\draw    (330.73,77.59) .. controls (326.73,84.59) and (326.73,88.59) .. (328.35,93.31) ;
\draw    (305.21,70.79) .. controls (310.53,75.33) and (317.53,73.33) .. (323.53,69.33) ;
\draw    (323.53,69.33) .. controls (323.27,71.93) and (325.27,74.93) .. (330.73,77.59) ;
\draw  [dash pattern={on 4.5pt off 4.5pt}]  (239.18,233.4) .. controls (221.08,203.36) and (205.52,197.94) .. (185.3,179.15) .. controls (165.08,160.36) and (151.08,136.36) .. (102.77,127.61) ;
\draw  [dash pattern={on 4.5pt off 4.5pt}]  (126.73,82.65) .. controls (164.73,118.65) and (170.73,153.65) .. (196.73,168.65) .. controls (222.73,183.65) and (218.51,180.31) .. (225.73,187.65) .. controls (232.95,194.98) and (252.31,211.72) .. (256.37,232.78) ;
\draw  [line width=3] [line join = round][line cap = round] (144.53,142.33) .. controls (145,142.33) and (145.53,142.86) .. (145.53,143.33) ;
\draw  [line width=3] [line join = round][line cap = round] (164.53,133.33) .. controls (164.53,133) and (164.53,132.67) .. (164.53,132.33) ;
\draw  [line width=3] [line join = round][line cap = round] (179.93,174.07) .. controls (180.27,174.07) and (180.6,174.07) .. (180.93,174.07) ;
\draw  [line width=3] [line join = round][line cap = round] (185.93,161.07) .. controls (185.93,161.4) and (185.93,161.73) .. (185.93,162.07) ;
\draw  [line width=3] [line join = round][line cap = round] (213.93,202.07) .. controls (213.93,202.4) and (213.93,202.73) .. (213.93,203.07) ;
\draw  [line width=3] [line join = round][line cap = round] (224.93,188.07) .. controls (224.93,187.73) and (224.93,187.4) .. (224.93,187.07) ;
\draw [color={rgb, 255:red, 208; green, 2; blue, 27 }  ,draw opacity=1 ][line width=2.25]    (227.2,236.6) .. controls (254.43,230.25) and (248.43,233.25) .. (269,233) ;
\draw  [line width=3] [line join = round][line cap = round] (238.73,232.73) .. controls (238.73,233.07) and (238.73,233.4) .. (238.73,233.73) ;
\draw  [line width=3] [line join = round][line cap = round] (254.73,231.73) .. controls (255.2,231.73) and (255.73,232.26) .. (255.73,232.73) ;

\draw (233.18,235.8) node [anchor=north west][inner sep=0.75pt]  [font=\scriptsize]  {$p_{2}$};
\draw (258.37,236.18) node [anchor=north west][inner sep=0.75pt]  [font=\scriptsize]  {$p_{1}$};
\draw (65,119.4) node [anchor=north west][inner sep=0.75pt]  [font=\scriptsize]  {$f^{+}( p_{2})$};
\draw (100,63.4) node [anchor=north west][inner sep=0.75pt]  [font=\scriptsize]  {$f^{+}( p_{1})$};
\draw (230.2,182) node [anchor=north west][inner sep=0.75pt]  [font=\tiny]  {$f_{1}( p_{1})$};
\draw (193.88,155.05) node [anchor=north west][inner sep=0.75pt]  [font=\tiny,rotate=-22.27]  {$f_{2}( p_{1})$};
\draw (165.55,106.97) node [anchor=north west][inner sep=0.75pt]  [font=\tiny,rotate=-66.28]  {$f_{3}( p_{1})$};
\draw (252,191.4) node [anchor=north west][inner sep=0.75pt]  [font=\footnotesize,color={rgb, 255:red, 208; green, 2; blue, 27 }  ,opacity=1 ]  {$H_{j}$};
\draw (237,217.4) node [anchor=north west][inner sep=0.75pt]  [font=\small,color={rgb, 255:red, 208; green, 2; blue, 27 }  ,opacity=1 ]  {$e_{i}$};
\draw (390,160.4) node [anchor=north west][inner sep=0.75pt]    {$\mathbb{D} $};

\end{tikzpicture}

    \caption{In constructing $\phi(\gamma)$, the order of intersection points on $e_i$ matches with the order of their forwarding endpoins on the boundary of $\mathbb{D}$.}
    \label{fig: universal-hexagon}
\end{figure}
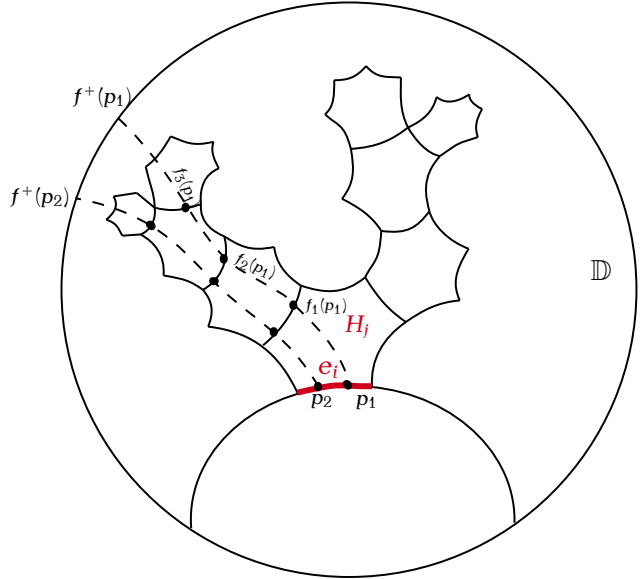

We can also determine the proper ordering using symbolic words. 

Assume that the symbolic word associated to $\gamma$ is
\[
\omega(\gamma)=\omega_1\omega_2\cdots\omega_m,
\]
where $m=\ell_{\mathrm{com}}(\gamma)$. 

We determine the relative location of intersection points on edge $\omega_k$ as follows. 

For each intersection point $p=\gamma\cap\omega_k$, define
\[
f_t(p):=\gamma\cap\omega_{k+t},
\]
where the index $k+t$ is taken modulo $m$. Now consider two outgoing arcs from $p_1,p_2$ on $e_i$ that remain on the same edge for exactly $k$ successive symbols. 
Equivalently, assume that $f_t(p_1)$ and $f_t(p_2)$ lie on the same edge for all $t=0,\dots,k$, but that $f_{k+1}(p_1)$ and $f_{k+1}(p_2)$ lie on distinct edges of the same hexagon (see Figure~\ref{fig: word-k}). If the edge containing $f_{k+1}(p_1)$ lies to the right (to the left) of the edge containing $f_{k+1}(p_2)$ with respect to the cyclic order, excluding the edge containing $f_k(p_1)$ and $f_k(p_2)$, then we set $p_1$ to lie on the right (to the left) of $p_2$. 
This convention defines the proper ordering and relative locations of $p_1$ and $p_2$ on $e_i$ (see Figure~\ref{fig: word-k}).

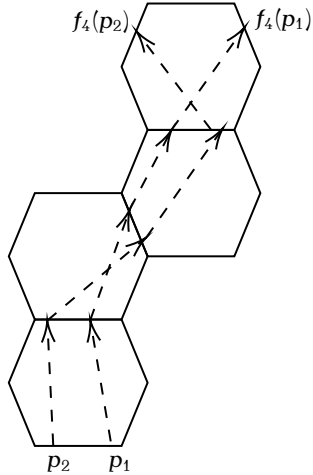
\begin{figure}[h]
    \centering

\tikzset{every picture/.style={line width=0.75pt}} 

\begin{tikzpicture}[x=0.75pt,y=0.75pt,yscale=-1,xscale=1]

\draw   (271,79.9) -- (284.13,48) -- (327.88,48) -- (341,79.9) -- (327.88,111.8) -- (284.13,111.8) -- cycle ;
\draw   (271,143.7) -- (284.13,111.8) -- (327.88,111.8) -- (341,143.7) -- (327.88,175.6) -- (284.13,175.6) -- cycle ;
\draw   (214.13,175.6) -- (227.25,143.7) -- (271,143.7) -- (284.13,175.6) -- (271,207.5) -- (227.25,207.5) -- cycle ;
\draw   (214.13,239.4) -- (227.25,207.5) -- (271,207.5) -- (284.13,239.4) -- (271,271.3) -- (227.25,271.3) -- cycle ;
\draw  [dash pattern={on 4.5pt off 4.5pt}]  (236.53,271.02) -- (233.63,210.01) ;
\draw [shift={(233.53,208.02)}, rotate = 87.27] [color={rgb, 255:red, 0; green, 0; blue, 0 }  ][line width=0.75]    (10.93,-3.29) .. controls (6.95,-1.4) and (3.31,-0.3) .. (0,0) .. controls (3.31,0.3) and (6.95,1.4) .. (10.93,3.29)   ;
\draw  [dash pattern={on 4.5pt off 4.5pt}]  (233.53,208.02) -- (282,167.3) ;
\draw [shift={(283.53,166.02)}, rotate = 139.97] [color={rgb, 255:red, 0; green, 0; blue, 0 }  ][line width=0.75]    (10.93,-3.29) .. controls (6.95,-1.4) and (3.31,-0.3) .. (0,0) .. controls (3.31,0.3) and (6.95,1.4) .. (10.93,3.29)   ;
\draw  [dash pattern={on 4.5pt off 4.5pt}]  (283.53,166.02) -- (321.05,112.44) ;
\draw [shift={(322.2,110.8)}, rotate = 125] [color={rgb, 255:red, 0; green, 0; blue, 0 }  ][line width=0.75]    (10.93,-3.29) .. controls (6.95,-1.4) and (3.31,-0.3) .. (0,0) .. controls (3.31,0.3) and (6.95,1.4) .. (10.93,3.29)   ;
\draw  [dash pattern={on 4.5pt off 4.5pt}]  (316.2,111.8) -- (279.74,63.61) ;
\draw [shift={(278.53,62.02)}, rotate = 52.89] [color={rgb, 255:red, 0; green, 0; blue, 0 }  ][line width=0.75]    (10.93,-3.29) .. controls (6.95,-1.4) and (3.31,-0.3) .. (0,0) .. controls (3.31,0.3) and (6.95,1.4) .. (10.93,3.29)   ;
\draw  [dash pattern={on 4.5pt off 4.5pt}]  (265.53,269.02) -- (255.53,209.77) ;
\draw [shift={(255.2,207.8)}, rotate = 80.42] [color={rgb, 255:red, 0; green, 0; blue, 0 }  ][line width=0.75]    (10.93,-3.29) .. controls (6.95,-1.4) and (3.31,-0.3) .. (0,0) .. controls (3.31,0.3) and (6.95,1.4) .. (10.93,3.29)   ;
\draw  [dash pattern={on 4.5pt off 4.5pt}]  (255.2,207.8) -- (274.53,153.68) ;
\draw [shift={(275.2,151.8)}, rotate = 109.65] [color={rgb, 255:red, 0; green, 0; blue, 0 }  ][line width=0.75]    (10.93,-3.29) .. controls (6.95,-1.4) and (3.31,-0.3) .. (0,0) .. controls (3.31,0.3) and (6.95,1.4) .. (10.93,3.29)   ;
\draw  [dash pattern={on 4.5pt off 4.5pt}]  (275.2,151.8) -- (295.27,113.57) ;
\draw [shift={(296.2,111.8)}, rotate = 117.7] [color={rgb, 255:red, 0; green, 0; blue, 0 }  ][line width=0.75]    (10.93,-3.29) .. controls (6.95,-1.4) and (3.31,-0.3) .. (0,0) .. controls (3.31,0.3) and (6.95,1.4) .. (10.93,3.29)   ;
\draw  [dash pattern={on 4.5pt off 4.5pt}]  (296.2,111.8) -- (332.04,61.43) ;
\draw [shift={(333.2,59.8)}, rotate = 125.43] [color={rgb, 255:red, 0; green, 0; blue, 0 }  ][line width=0.75]    (10.93,-3.29) .. controls (6.95,-1.4) and (3.31,-0.3) .. (0,0) .. controls (3.31,0.3) and (6.95,1.4) .. (10.93,3.29)   ;

\draw (263,275) node [anchor=north west][inner sep=0.75pt]  [font=\footnotesize]  {$p_{1}$};
\draw (232,275) node [anchor=north west][inner sep=0.75pt]  [font=\footnotesize]  {$p_{2}$};
\draw (245,49.4) node [anchor=north west][inner sep=0.75pt]  [font=\footnotesize]  {$f_{4}( p_{2})$};
\draw (337,48.4) node [anchor=north west][inner sep=0.75pt]  [font=\footnotesize]  {$f_{4}( p_{1})$};

\end{tikzpicture}

    \caption{We determine the relative positions of points on each edge according to their forward path.
}
    \label{fig: word-k}
\end{figure}

It is straightforward to verify that the induced pairwise ordering on $e_i$ is globally compatible among all outgoing intersection points on $e_i$. 
More precisely, consider three intersection points $p_1,p_2,p_3 \subset e_i$. 
If proper ordering requires that $p_1$ lie to the right of $p_2$ and $p_2$ lie to the right of $p_3$, then we can see that proper ordering also requires that $p_1$ lie to the right of $p_3$. 
Thus, the induced relation is transitive and defines a well-defined global ordering of intersection points on $e_i$. 
Consequently, the \textit{proper ordering} and curve $\phi(\gamma)$ are well-defined.

It is easy to see $\phi(\gamma)$ does not have any intersection of type $2(a)$. We now show that $\phi(\gamma)$ also does not have intersections of type $3$. 
Consider two arcs
$$
a_1 = p_1 f_1(p_1), \qquad b_1 = p_2 f_1(p_2),
$$
which start and end on the same pair of edges in $H_j$. Assume that $p_1$ lies to the right of $p_2$ on $e_i$, in the proper ordering. By construction, this implies that there exists an integer $k \ge 1$ such that
$f_i(p_1)$ and $f_i(p_2)$ lie on the same edge for all $i=0,\dots,k$, while the edge containing $f_{k+1}(p_1)$ is on the right of the edge containing $f_{k+1}(p_2)$. By the definition of the proper ordering in $\phi(\gamma)$, $f_1(p_1)$ must lie to the right of $f_1(p_2)$ as well. Consequently, the arcs $a_1$ and $b_1$ do not intersect. This shows that no intersections of type~$3$ occur in $\phi(\gamma)$.

Moreover, the number of intersection points of type~$2$ in $\phi(\gamma)$ is not greater than the number of such intersections in $\gamma$. 
Indeed, we show that each intersection point of type~$2(b)$ in $\phi(\gamma)$ corresponds to at least one intersection point in $\gamma$, and we note that $\phi(\gamma)$ has no intersection points of type~$2(a)$. Consider intersecting arcs
$$
f_{-1}(p_1)p_1 \quad \text{and} \quad f_{-1}(p_2)p_2
$$
in $H_j$ of type $2(b)$. In other words, $p_1$ and $p_2$ lie on the same edge, while the starting points $f_{-1}(p_1)$ and $f_{-1}(p_2)$ lie on distinct edges. 
Assume that $p_1$ lies to the right of $p_2$. By the definition of proper ordering, there exists an integer $k \ge 0$ such that
$f_i(p_1)$ and $f_i(p_2)$ lie on the same edge for all $i=0,\dots,k$, while the edge containing $f_{k+1}(p_1)$ lies to the right of the edge containing $f_{k+1}(p_2)$. Let $\omega(p)$ denote the edge containing $p$ in the hexagon that includes the arc $pf_1(p)$. 
Recall that any two linearly homotopic curves admit the same symbolic coding. It is easy to see that any closed curve whose symbolic word contains the subwords
$$
\omega(f_{-1}(p_1))\,\omega(p_1)\,\omega(f_1(p_1))\,\cdots\,\omega(f_{k+1}(p_1))
\quad \text{and} \quad
\omega(f_{-1}(p_2))\,\omega(p_2)\,\omega(f_1(p_2))\,\cdots\,\omega(f_{k+1}(p_2))
$$
must have an intersection between the corresponding subarcs, since their endpoints are linked. 
In particular, since $\gamma$ is linearly homotopic to $\phi(\gamma)$ and hence has the same symbolic word, $\gamma$ also contains such an intersection. Therefore, the total number of intersection points of type $2$ does not increase under the map $\phi$.

\end{proof}

Curve $\phi(\gamma)$ that satisfies the properties of Theorem \ref{thm: linear.homotop} is called a \emph{linearly modified representative} of $\gamma$.

\paragraph{Counting words.} We apply the following combinatorial results in the proof of Theorem \ref{thm: P-closed-geod}.

Given a linearly modified curve $\gamma$, we associate a symbolic word to each edge $e_i$ of a hexagon $H_j$ using the outgoing arcs of $\gamma$ from $e_i$, as follows. 
Let $p_1,\dots,p_m$ be the intersection points of outgoing arcs from $e_i$, ordered from left to right. 
We assign to $e_i$ the word $\omega_1 \omega_2 \cdots \omega_m,$ where $\omega_k$ denotes the edge of $H_j$ containing the point $f_1(p_k)$. Similarly, using the incoming arcs of $\gamma$ to $e_i$, we assign another symbolic word to the edge $e_i$ by recording the edges containing the corresponding backward points. In total, a closed curve $\gamma$ determines $48g-48$ such words.

\begin{Lem}\label{lem: match.words}
Let $\gamma$ be a linearly modified closed curve. 
If we have all $48g-48$ assigned words to the edges for the incoming and outgoing arcs of $\gamma$ then we can uniquely determine the symbolic word $\omega(\gamma)$. 
\end{Lem}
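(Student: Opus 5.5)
The plan is to recover the curve edge by edge from the outgoing words. For each edge the outgoing word tells us, left to right, which edge of $H_j$ each outgoing strand leaves by. The incoming words tell us the order in which strands arrive at each edge. Matching these lets us trace $\gamma$ as a permutation of its intersection points, and $\omega(\gamma)$ is then read off by following that permutation.

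First I identify the set of crossing points. A point $p$ on an edge $e$ (seen from hexagon $H_j$, with the neighbouring hexagon $H_{j'}$ across $e$) is an outgoing point into $H_j$. It is simultaneously an incoming point for $H_{j'}$ on the same arc of $X$. So on each arc of $X$ the strands crossing it carry two labelled sequences: the outgoing word read from $H_j$ and the incoming word read from $H_{j'}$. The length of each word is the number of crossings. Because the two hexagons induce opposite orientations on the shared edge, the left-to-right order of one word is the right-to-left order of the other. Thus the $k$-th letter of one word and the $(m+1-k)$-th letter of the other refer to the same physical point. Fixing these conventions identifies every crossing point with its two labels, namely its forward edge and its backward edge.

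Next I show that, inside a hexagon $H_j$ and for a fixed ordered pair of edges $(e,e')$, the strands running from $e$ to $e'$ are matched order-preservingly. Here I use that $\gamma$ is linearly modified. By Theorem~\ref{thm: linear.homotop} there are no intersections of type $3$, so two arcs from $e$ to $e'$ do not cross. Two disjoint geodesic arcs joining the same two sides of a hexagon have endpoints in consistent order: the $r$-th such strand from the left on $e$ ends at the $r$-th such strand, in the induced order, on $e'$. The outgoing word on $e$ tells us which points on $e$ go to $e'$ and in what order. The incoming word on $e'$ tells us which points on $e'$ come from $e$ and in what order. Pairing them in the unique order-compatible way, with the orientation reversal handled as above, determines the successor $f_1(p)$ of every crossing point $p$.

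Finally, the successor map is a permutation of the finite set of crossing points, and since $\gamma$ is a single closed curve it is one cycle. Starting from the marked point and iterating $f_1$ lists the edges in order, which gives $\omega(\gamma)$ up to the choice of starting letter; the marked point fixes that choice, or we regard words up to cyclic shift. The step I expect to be the main obstacle is the orientation bookkeeping in the matching step. One must check that ``no type $3$ intersection'' forces order preservation, rather than reversal, with respect to the paper's convention of orienting an edge by the hexagon containing the forward arc. Order preservation should follow because non-crossing arcs between two sides of a convex hexagon, with the sides oriented by the hexagon's boundary orientation, pair up in reversed boundary order, and the incoming-word convention flips this back. I would verify this by a direct picture, as in Figure~\ref{fig: int.type}.
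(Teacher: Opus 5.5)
Your proposal is correct and follows the paper's argument. The outgoing word on $e$ and the incoming word on $e'$ single out the endpoints of the arcs from $e$ to $e'$, and the absence of type $3$ intersections forces the unique order-preserving pairing. Tracing the resulting successor map then recovers $\omega(\gamma)$, up to the cyclic shift fixed by the marked point. Your explicit identification of crossing points across a shared edge spells out a step the paper leaves implicit.

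One bookkeeping caution: under the paper's convention (orient an edge by the hexagon containing the forward arc), the outgoing word of $H_j$ and the incoming word of $H_{j'}$ on a shared arc are read in the same order, not reversed. Your first step assumes reversal, while your final paragraph uses the paper's convention, so fix one convention and use it throughout.
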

\begin{proof}

Let $e$ and $e'$ be two edges of a hexagon.
Let $\omega=\omega_1\cdots\omega_m$ denote the word associated to the outgoing arcs from $e$, and let $\omega'=\omega'_1\cdots\omega'_{m'}$ denote the word associated to the incoming arcs to $e'$.
Let $p_1,\ldots,p_m$ and $p'_1,\ldots,p'_{m'}$ be the corresponding intersection points on $e$ and $e'$, respectively.

There is an arc connecting $p_s$ on $e$ to $p'_t$ on $e'$ if and only if $\omega_s=e'$ and $\omega'_t=e$.
Thus, the pair $(\omega,\omega')$ determines the set of endpoints of all arcs connecting $e$ to $e'$.

It remains to specify how these endpoints are paired.
Since $\gamma$ has no intersections of type $3$, there is a unique way to do so: the $i$-th point on $e$ must be connected to the $i$-th point on $e'$.
Once all arcs between edges of hexagons are connected in this manner, the symbolic coding $\omega(\gamma)$ is uniquely determined.

\end{proof}

Assume that $\epsilon>0$ is small enough constant compared to $n$.

\begin{Lem}\label{lem: number.permutation.hexagon}
Consider a hexagon with edges $e_1,\dots,e_6$, and points $p_1,\dots,p_n$ on edge $e_1$. 
For each $k=1,\dots,n$, connect $p_k$ to the midpoint of one of the edges $e_2,\dots,e_6$. This choice determines a word $\omega = \omega_1 \cdots \omega_n$, where $\omega_k = e_j$ if $p_k$ connects to the midpoint of $e_j$.

We say that a word $\omega$ is $(n,\epsilon)$-\emph{admissible} if the corresponding collection of arcs has at most $\epsilon n^2$ intersection points inside the hexagon. 
The number of admissible words is bounded above by 
$$
\leq 16 \epsilon^2 n^4 (\frac{e}{\sqrt{\epsilon}})^{8\sqrt{\epsilon}n}.
$$

\end{Lem}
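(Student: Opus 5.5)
We count admissible words by comparing each one with a sorted word. Order the target edges $e_2,\dots,e_6$ according to the cyclic order of the hexagon with $e_1$ removed. Whether the arcs from $p_s$ and $p_t$ (with $s<t$) cross depends only on the pair $(\omega_s,\omega_t)$: they cross exactly when $\omega_s$ lies strictly to the right of $\omega_t$ in this order. When $\omega_s=\omega_t$ the two arcs end at the same midpoint and do not cross in the interior. So the number of interior intersection points is the number of \emph{inversions} of $\omega$, viewed as a sequence in the totally ordered alphabet $\{e_2<\dots<e_6\}$. The task is therefore to count words of length $n$ over an alphabet of size $5$ with at most $\epsilon n^2$ inversions.

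The key step is a structural lemma. Write $\omega$ in terms of its \emph{descents}, i.e.\ indices $k$ with $\omega_k>\omega_{k+1}$. Equivalently, decompose $\omega$ into maximal nondecreasing blocks $B_1B_2\cdots B_r$. The plan is to show that few inversions forces few blocks, with $r\lesssim 4\sqrt{\epsilon}\,n$. A cleaner way to get this is to say that a word with at most $\epsilon n^2$ inversions is obtained from the sorted word $e_2^{a_2}\cdots e_6^{a_6}$ by displacing letters. For each pair of letters $x>y$, look at the bipartite pattern of positions. A word on two letters with $I$ inversions is determined by a partition of $I$ into at most $\min(a_x,a_y)$ parts, one part for each $y$ recording how many $x$'s precede it. The number of distinct parts, i.e.\ the number of ``switches'' between the two letters, is at most about $\sqrt{2I}$, since distinct part sizes $1,2,\dots,m$ sum to $m^2/2$. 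Summing over all ordered pairs, the number of letter changes along $\omega$ is bounded by roughly $\sum_{x>y}2\sqrt{2I_{xy}}\le C\sqrt{\epsilon}\,n$. Cauchy–Schwarz over the $10$ pairs, with $\sum I_{xy}\le\epsilon n^2$, gives a constant close to $8$ (the exact constant needs care). So $\omega$ is a concatenation of at most $r\le 8\sqrt{\epsilon}n$ constant runs.

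Now count. A word with at most $r$ runs is determined by the set of run-break positions and the letter of each run. The first is at most $\binom{n}{r}$ choices, summed over $r$; the second is at most $5^{r}$, or better, each break changes to one of $4$ letters. Using $\binom{n}{r}\le (en/r)^r$ with $r=8\sqrt{\epsilon}n$ gives a factor $(e/(8\sqrt{\epsilon}))^{8\sqrt\epsilon n}$. The $4^{r}$ letter factor combines with the $1/8^{r}$ slack to stay below $(e/\sqrt{\epsilon})^{8\sqrt{\epsilon}n}$. Summing over $r$ and over the smaller number of runs contributes only a polynomial prefactor. The stated $16\epsilon^2n^4$ presumably comes from this summation together with a choice of a few extra parameters, such as the block counts or the location of a boundary run. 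I would absorb these into a polynomial factor of that shape, using that $\epsilon$ is small compared to $n$ so that $(en/r)^r$ is increasing in $r$ on the relevant range.

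The main obstacle is the run-count step: getting a clean linear bound $r\le 8\sqrt{\epsilon}n$ with the right constant from the inversion bound. The two-letter argument shows that switches between a fixed pair cost quadratically many inversions. The difficulty is that runs in the full word interleave several letters, so a run boundary in $\omega$ might not be a switch in any single pair's subsequence with many inversions. I would handle this by charging each descent $\omega_k>\omega_{k+1}$ to the pair $(\omega_k,\omega_{k+1})$. Within one pair's subsequence, the $j$-th descent charged to that pair forces at least about $j$ new inversions, which gives a triangular-number lower bound. Ascents are bounded by descents plus $4$, since the letter value can only rise $4$ times net. If this charging fails, I would fall back to the weaker statement ``at most $c\sqrt{\epsilon}n$ descents'' and accept a worse constant.
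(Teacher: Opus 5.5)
Your reduction is sound: for $s<t$ the arcs from $p_s,p_t$ cross exactly when their letters are inverted, so admissibility means at most $\epsilon n^2$ inversions over the ordered alphabet $e_2<\dots<e_6$. Counting words by their number $r$ of constant runs, $\sum_{r'\le r}\binom{n-1}{r'-1}5\cdot 4^{r'-1}$, does give the stated bound provided $r\le 8\sqrt\epsilon\,n$. But that run bound is exactly the step you leave open, and neither route you sketch delivers it.

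\begin{itemize}
\item The claim that ascents are at most descents plus $4$ is false. A descent may drop the letter by up to $4$ while an ascent may raise it by only $1$, so $(e_2e_3e_4e_5e_6)^k$ has $4k$ ascents and $k-1$ descents. In general one only gets ascents $\le 4(\text{descents}+1)$.
\item The pairwise charging is valid: a letter change $\omega_k\ne\omega_{k+1}$ is a switch in the $\{\omega_k,\omega_{k+1}\}$-subword, and a two-letter word with $I_{xy}$ inversions has at most $2\sqrt{2I_{xy}}+1$ switches. But Cauchy--Schwarz over the $10$ pairs yields only $r\le 4\sqrt5\,\sqrt\epsilon\,n+11\approx 8.94\sqrt\epsilon\,n$.
\end{itemize}

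This is not a harmless loss of constant. With $r\approx C\sqrt\epsilon\, n$ your count is about $(4e/(C\sqrt\epsilon))^{C\sqrt\epsilon n}$, whose logarithm is $C\sqrt\epsilon\,n\log(1/\sqrt\epsilon)(1+o(1))$ as $\epsilon\to0$. For $C=4\sqrt5>8$ and small $\epsilon$ (the regime that matters for Theorem~\ref{thm: main.entropy}) this is weaker than $16\epsilon^2n^4(e/\sqrt\epsilon)^{8\sqrt\epsilon n}$. Falling back to ``a worse constant'' therefore does not prove the lemma as stated.

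The missing idea is to charge each letter change to its \emph{smaller} letter instead of to the pair. For $x\in\{e_2,\dots,e_5\}$:
\begin{itemize}
\item Let $I_x$ be the number of inversions whose smaller letter is $x$.
\item Let $b_x$ be the binary word obtained from the subsequence of letters $\ge x$ by recording only whether each letter equals $x$ or exceeds it.
\item The inversions of $b_x$ are exactly those $I_x$, so $b_x$ has at most $2\sqrt{2I_x}+1$ switches.
\item A change $\omega_k\ne\omega_{k+1}$ is a switch of $b_x$ for $x=\min(\omega_k,\omega_{k+1})$, since both positions survive in $b_x$ and stay adjacent. This assignment is injective.
\end{itemize}
Since the $I_x$ partition the inversions, $\sum_x I_x\le\epsilon n^2$ over only four levels. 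Cauchy--Schwarz then gives at most $4\sqrt{2\epsilon}\,n+4$ changes, so $r\le 4\sqrt2\,\sqrt\epsilon\,n+5\le 8\sqrt\epsilon\,n$ once $\sqrt\epsilon\,n\ge 3$. Your counting then finishes the proof, in fact with $8$ improved to $4\sqrt2$ up to lower-order factors.

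These levels are precisely the ones in the paper's proof, which iterates a two-letter count four times ($e_2$ versus the rest, then $e_3$ versus the remaining letters, and so on). There each level is counted directly: choose the $\sqrt\epsilon n$ extreme positions of one letter, and observe that at most $\sqrt\epsilon n$ letters of the other kind can lie on the far side of them. This gives $\bigl(2\sqrt\epsilon n\binom{n}{\sqrt\epsilon n}^2\bigr)^4$, whence the prefactor $16\epsilon^2n^4$. Each level is allowed the full budget $\epsilon n^2$ rather than a share of it.
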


\begin{proof}

First, consider a triangle $e_1e_2e_3$ instead of a hexagon and connect arcs to the midpoints of two edges $e_2,e_3$. Assume that $e_1,e_2,e_3$ are clockwise. 

The number of admissible words that has $\leq \sqrt{\epsilon}n$ arcs connected to $e_2$ is 
$$
\leq \sqrt{\epsilon}n {n \choose \sqrt{\epsilon}n}.  
$$ 
Indeed, the coefficient $\sqrt{\epsilon}n$ represents the possible number of arcs connected to $e_2$ which can be $1,\dots,\lfloor \sqrt{\epsilon}n \rfloor$. Moreover, there are ${n \choose k}$ words with exactly $k$ arcs connected to $e_2$ and ${n \choose k} \leq {n \choose \sqrt{\epsilon}n}$ when $\epsilon$ is small compared to $n$ and $k\leq \sqrt{\epsilon}n$.

If the number of points connected to $e_2$ is $\geq \sqrt{\epsilon}n$ then choose the $\sqrt{\epsilon}n$ rightmost points connected to $e_2$; there are ${n \choose \sqrt{\epsilon}n}$ possible ways to indicate these points on $e_1$. Among these $\sqrt{\epsilon}n$ points let $q$ be the leftmost one. See Figure \ref{fig: triangle}. The points on the right of $q$ are completely determined where to attach; $\sqrt{\epsilon}n$ of them were chosen to connect to $e_2$, and the rest must be attached to $e_3$.

Now consider the points on the left of $q$. At most $\sqrt{\epsilon}n$ of them can be attached to $e_3$ unless we have $>\epsilon n^2$ intersection points (between these arcs and the $\sqrt{\epsilon}n$ chosen arcs connected to $e_2$). Therefore, we have $\leq \sqrt{\epsilon}n$ points connected to $e_3$ on the left of $q$. We conclude that the number of ways to determine the subword for the left points is
$$
\leq \sqrt{\epsilon}n {n \choose \sqrt{\epsilon}n}.
$$

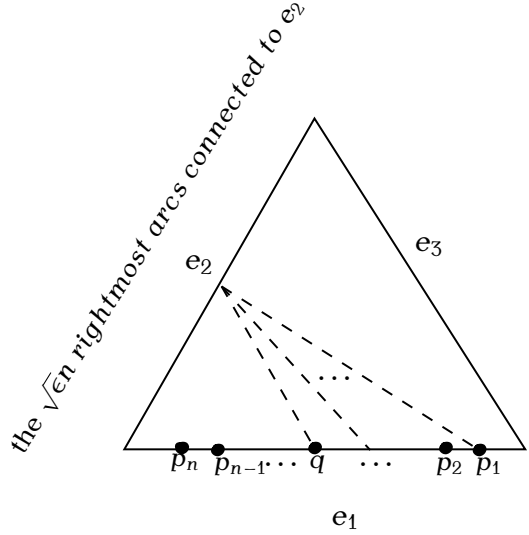
\begin{figure}[h]
    \centering

\tikzset{every picture/.style={line width=0.75pt}} 

\begin{tikzpicture}[x=0.75pt,y=0.75pt,yscale=-1,xscale=1]

\draw   (258.9,48.8) -- (365.2,215.8) -- (163,215.8) -- cycle ;
\draw  [line width=3] [line join = round][line cap = round] (190.51,214.21) .. controls (190.51,214.68) and (191.04,215.21) .. (191.51,215.21) ;
\draw  [line width=3] [line join = round][line cap = round] (191.51,214.21) .. controls (194.07,214.21) and (188.7,214.81) .. (191.51,216.21) .. controls (192.34,216.62) and (193.18,215.86) .. (193.51,215.21) .. controls (194.23,213.76) and (192.4,213.76) .. (191.51,214.21) .. controls (191.21,214.36) and (191.51,214.87) .. (191.51,215.21) ;
\draw  [line width=3] [line join = round][line cap = round] (209.51,215.21) .. controls (212.07,215.21) and (206.7,215.81) .. (209.51,217.21) .. controls (210.34,217.62) and (211.18,216.86) .. (211.51,216.21) .. controls (212.23,214.76) and (210.4,214.76) .. (209.51,215.21) .. controls (209.21,215.36) and (209.51,215.87) .. (209.51,216.21) ;
\draw  [line width=3] [line join = round][line cap = round] (258.51,214.21) .. controls (261.07,214.21) and (255.7,214.81) .. (258.51,216.21) .. controls (259.34,216.62) and (260.18,215.86) .. (260.51,215.21) .. controls (261.23,213.76) and (259.4,213.76) .. (258.51,214.21) .. controls (258.21,214.36) and (258.51,214.87) .. (258.51,215.21) ;
\draw  [line width=3] [line join = round][line cap = round] (341.51,215.21) .. controls (344.07,215.21) and (338.7,215.81) .. (341.51,217.21) .. controls (342.34,217.62) and (343.18,216.86) .. (343.51,216.21) .. controls (344.23,214.76) and (342.4,214.76) .. (341.51,215.21) .. controls (341.21,215.36) and (341.51,215.87) .. (341.51,216.21) ;
\draw  [line width=3] [line join = round][line cap = round] (324.51,214.21) .. controls (327.07,214.21) and (321.7,214.81) .. (324.51,216.21) .. controls (325.34,216.62) and (326.18,215.86) .. (326.51,215.21) .. controls (327.23,213.76) and (325.4,213.76) .. (324.51,214.21) .. controls (324.21,214.36) and (324.51,214.87) .. (324.51,215.21) ;
\draw  [dash pattern={on 4.5pt off 4.5pt}]  (212.05,133.58) -- (342,216.2) ;
\draw  [dash pattern={on 4.5pt off 4.5pt}]  (212.05,133.58) -- (287,216.2) ;
\draw  [dash pattern={on 4.5pt off 4.5pt}]  (212.05,133.58) -- (260,217.2) ;

\draw (339,218.6) node [anchor=north west][inner sep=0.75pt]  [font=\small]  {$p_{1}$};
\draw (319,218.6) node [anchor=north west][inner sep=0.75pt]  [font=\small]  {$p_{2}$};
\draw (255,217.6) node [anchor=north west][inner sep=0.75pt]  [font=\small]  {$q$};
\draw (207,218.6) node [anchor=north west][inner sep=0.75pt]  [font=\small]  {$p_{n-1}$};
\draw (185,216.6) node [anchor=north west][inner sep=0.75pt]  [font=\small]  {$p_{n}$};
\draw (280,220.6) node [anchor=north west][inner sep=0.75pt]    {$\dotsc $};
\draw (232,220.6) node [anchor=north west][inner sep=0.75pt]    {$\dotsc $};
\draw (259,177.6) node [anchor=north west][inner sep=0.75pt]    {$\dotsc $};
\draw (100.88,213.21) node [anchor=north west][inner sep=0.75pt]  [font=\small,rotate=-301.77,xslant=0]  {the $\sqrt{\epsilon } n\ rightmost\ arcs\ connected\ to\ e_{2}$};
\draw (266,245.6) node [anchor=north west][inner sep=0.75pt]    {$e_{1}$};
\draw (192,116.6) node [anchor=north west][inner sep=0.75pt]    {$e_{2}$};
\draw (308,107.6) node [anchor=north west][inner sep=0.75pt]    {$e_{3}$};

\end{tikzpicture}

    \caption{Point $q$ divides the points on $e_1$ into two blocks}
    \label{fig: triangle}
\end{figure}

Therefore, in total, the number of such possible words is 

$$
\leq \sqrt{\epsilon}n {n \choose \sqrt{\epsilon}n}+ \sqrt{\epsilon}n {n \choose \sqrt{\epsilon}n}^2 \leq 2\sqrt{\epsilon}n {n \choose \sqrt{\epsilon}n}^2 \leq 2\sqrt{\epsilon}n (\frac{e}{\sqrt{\epsilon}})^{2\sqrt{\epsilon}n}
$$

The last inequality follows from Stirling's approximation for ${n \choose \sqrt{\epsilon}n}$.

By iterating this procedure four times, we obtain the desired result for a hexagon. 
Equivalently, we first count the number of words formed using two symbols, $e_2$ and $b$, where the symbol $b$ means we have an arc from $e_1$ to one of the remaining four edges $e_3, e_4,e_5,e_6$. Once such words are fixed, determining how the positions labeled by $b$ are assigned to the specific edges $e_3,e_4,e_5,e_6$ becomes an analogous problem.

Consequently, the total number of admissible words associated to $e_1$ is
$$
\leq (2\sqrt{\epsilon}n (\frac{e}{\sqrt{\epsilon}})^{2\sqrt{\epsilon}n})^4.
$$ 
\end{proof}

Let $\mathcal{G}$ be the set of all closed geodesics on $X$.
Recall that: 
$$
P_{\epsilon}(T):=| \{\gamma \in \mathcal{G}: \ell_X(\gamma)\leq T, \, i(\gamma,\gamma)\leq \epsilon T^2 \}|.
$$

\textbf{Proof of Theorem \ref{thm: P-closed-geod}.}  From Lemma \ref{lemma: com.length} we have

$$
P_{\epsilon}(T) \leq |\{ \gamma \in \mathcal{G}| \, \ell_{com}(\gamma) \leq c(X)T, i(\gamma,\gamma) \leq \epsilon T^2 \}|.
$$

Now instead of counting closed geodesics $\gamma$ we aim to count the number of possible words $\omega(\phi(\gamma))$, where $\phi(\gamma)$ is the linearly modified representatives of $\gamma$ as described in Theorem \ref{thm: linear.homotop}.

For $\gamma \in P_{\epsilon}(T)$, by Theorem \ref{thm: linear.homotop}, $\phi(\gamma)$ has combinatorial length $\leq c(X)T$ and the number of its intersection points of type $2$ is $\leq \epsilon T^2$. On one hand, from Lemma \ref{lem: match.words},  we have
$$
|\{ \gamma \in \mathcal{G}| \, \ell_{com}(\gamma) \leq c(X)T, i(\gamma,\gamma) \leq \epsilon T^2 \}| \leq A(n,\delta)^{48g-48}, 
$$
where $A(n, \delta)$ is the number of possible admissible words, as described in Lemma \ref{lem: number.permutation.hexagon}, for $n=c(X)T$ and $\delta=\epsilon/c(X)^2$ instead of $\epsilon$ appearing in the statement of that lemma. Therefore, we have
$$
2P_{\epsilon}(T)\leq (2\sqrt{\epsilon}T(\frac{ec(X)}{\sqrt{\epsilon}})^{2\sqrt{\epsilon}T})^{4(48g-48)}\leq (\frac{ec(X)}{\sqrt{\epsilon}})^{385\sqrt{\epsilon}T(g-1)},
$$
as required. We conssidered $2P_{\epsilon}(T)$ instead of $P_{\epsilon}(T)$ since the counted curves are oriented.
\qed

\begin{Remark}\label{remark: b}
     The constant $b_X$ appearing in Theorem \ref{thm: P-closed-geod} is equal to $e\cdot c(X)$ where $c(X)$ is the constant appearing in Lemma \ref{lemma: com.length}. 
\end{Remark}

\begin{Remark}\label{remark: bc.bound}
    We have the following bounds on the interaction strength and the Bers' constant: 
    $$
    I(X)\leq \frac{4}{\sys(X)^2}\, ,  \,  \, \, \, \, \,  L_g\leq 26(g-1)\, ; \, \, \, \, \, \, 
    $$
    see Proposition $2.4$ of \cite{int-sys-T}, and Theorem $5.1.2$ of \cite{Bsr}. These bounds imply
   $$
   c(X)\leq \frac{2184(g-1)^2}{\sys(X)^2}+\frac{12}{\sys(X)}
   $$
   and since $\sys(X)\leq 2\log(4g-2)\leq 4(g-1)$ (see \cite[Lemma 5.2.1]{Bsr}) we have
   $$
   b_X \leq ec(X)\leq e\big( \frac{2184(g-1)^2}{\sys(X)^2}+ \frac{48(g-1)}{\sys(X)^2}\big)\leq \frac{5974(g-1)^2}{\sys(X)^2}, 
   $$
     where $b_X$ is the variable appearing in Theorem \ref{thm: P-closed-geod}. 
\end{Remark}

The symbolic coding has also been employed in other settings to study the geometry of curves. See \cite{chas-self-marcov}\cite{series-symbolic}\cite{katok-symbolic}. For example, in joint work with Zhang~\cite{Tina-zhang-halo}, symbolic codings based on train tracks are used to analyze limit sets arising from measured laminations. In that paper, the setting and goals are different from the present work, but both approaches reflect the usefulness of symbolic descriptions in understanding entropy, dynamical phenomena, and self-intersection number. 

\section{Measure-theoretic entropy}\label{sec: entropy}

In this section, we prove Theorem \ref{thm: entropy<P} and Theorem \ref{thm: main.entropy}.

\paragraph{Entropy.} Let $M$ be a compact metric space with distance function $d$ and a continuous flow $f=\{f_t\}_{t \in \mathbb{R}}$. Assume that $\mu$ is a Borel probability measure on $M$,  which is $f$-invariant.
Let $h_{M}(\mu)$ be the measure-theoretic entropy of $\mu$. When $\mu$ is ergodic with respect to $f$, Katok proved the following equivalent definition for $h_{M}(\mu)$.     

For $T>0$ and $v_1,v_2 \in M$, define 
$$
d_T(v_1,v_2):= \max \limits_{0 \leq t \leq T} d(f_t(v_1), f_t(v_2)).
$$

Given $T,\epsilon >0,\, \delta \in (0,1)$, define $N_{\mu}(T, \epsilon, \delta)$ as the minimum number of balls of radius $\epsilon$, with respect to metric $d_T$, which is required to cover a subset of $M$ with measure $\geq 1-\delta$. 

\begin{Prop}\label{prop: katok} (Katok) Assume that $\mu$ is an ergodic invariant measure.  For every $\delta \in (0,1)$ we have 
$$
h_{M}(\mu)= \lim \limits_{\epsilon \rightarrow 0} \liminf \limits_{T \rightarrow \infty} \frac{\log N_{\mu}(T, \epsilon, \delta)}{T}=
\lim \limits_{\epsilon \rightarrow 0} \limsup \limits_{T \rightarrow \infty} \frac{\log N_{\mu}(T, \epsilon, \delta)}{T}.
$$
\end{Prop}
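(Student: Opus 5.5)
This is Katok's entropy formula (Katok 1980, Theorem 1.1). The plan is to prove the two inequalities
\[
h_M(\mu)\le \lim_{\epsilon\to0}\liminf_{T\to\infty}\frac{\log N_\mu(T,\epsilon,\delta)}{T}
\quad\text{and}\quad
\lim_{\epsilon\to0}\limsup_{T\to\infty}\frac{\log N_\mu(T,\epsilon,\delta)}{T}\le h_M(\mu).
\]
Both sides are monotone in $\epsilon$, so the limits in $\epsilon$ exist. We work with the time-one map $f_1$ and use $h_\mu(f_1)=h_M(\mu)$ (Abramov). The time-$T$ Bowen balls for the flow and for $f_1$ agree up to changing $\epsilon$ by a modulus of continuity of $f$ on $[0,1]$, so it suffices to treat integer $T=n$ and the map $f_1$. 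Ergodicity of the flow gives ergodicity of $f_t$ for all but countably many $t$; if $f_1$ is not ergodic we replace it by $f_{t_0}$ with $t_0$ generic. Alternatively we use the ergodic decomposition of $f_1$, whose components are permuted by the flow and therefore all have the same entropy.

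\textbf{Upper bound.} Fix $\eta>0$ and choose a finite partition $\xi$ with $\mu(\partial\xi)=0$ and $h_\mu(f_1,\xi)>h_\mu(f_1)-\eta$. Then use the Brin--Katok local entropy theorem: for $\mu$-a.e.\ $x$,
\[
\lim_{\epsilon\to0}\liminf_n -\tfrac1n\log\mu(B_n(x,\epsilon))=h_\mu(f_1).
\]
By ergodicity this value is a.e.\ constant. Hence for small $\epsilon$ there is a set $G$ of measure $>1-\delta/2$ on which $\mu(B_n(x,\epsilon))\le e^{-n(h-\eta)}$ for all $n\ge N$ (Egorov). Any cover of a set of measure $\ge1-\delta$ by $\epsilon/2$-balls has balls meeting $G$ in total measure $\ge1-\tfrac{3\delta}{2}$, which we may assume positive after shrinking $\delta$. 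Each ball meeting $G$ is contained in some $B_n(x,\epsilon)$ with $x\in G$. So the number of balls is $\ge c\,e^{n(h-\eta)}$, which gives the $\liminf$ inequality. To avoid quoting Brin--Katok, we can instead argue directly with Shannon--McMillan--Breiman for $\xi$: a Bowen ball of small radius meets few atoms of $\xi^n$. The key counting is that points within $\epsilon$ of $\partial\xi$ are visited with frequency $<\eta$, by the ergodic theorem, so a ball meets at most $e^{C\eta n\log|\xi|}$ atoms, each of measure $\le e^{-n(h-\eta)}$ on the good set.

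\textbf{Lower bound.} This is the direction that needs $\epsilon\to0$ and is the main obstacle. Take a partition $\xi$ of diameter $<\epsilon$. By Shannon--McMillan--Breiman and ergodicity, the atoms of $\xi^n=\bigvee_{k<n}f_1^{-k}\xi$ of measure $\ge e^{-n(h_\mu(f_1,\xi)+\eta)}$ cover a set of measure $\ge1-\delta$ for $n$ large. There are at most $e^{n(h+\eta)}$ such atoms, and each lies in a $d_n$-ball of radius $\epsilon$ around any of its points. Therefore $N_\mu(n,\epsilon,\delta)\le e^{n(h+\eta)}$, since $h_\mu(f_1,\xi)\le h_\mu(f_1)$. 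Passing back to continuous $T$ via the comparison of Bowen balls then finishes, and the limits in $\epsilon$ coincide with $h_M(\mu)$ independently of $\delta$.
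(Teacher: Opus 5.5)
The paper gives no proof of this proposition. It is quoted from Katok and used only as input to the proof of Theorem~\ref{thm: entropy<P}, so a citation is all the paper needs. You instead reconstruct the standard proof of Katok's formula, and it is sound in outline. You pass to an ergodic time-$t_0$ map via Pugh--Shub and $h_\mu(f_{t_0})=t_0\,h_\mu(f_1)$, and compare flow and discrete Bowen balls by uniform continuity. You bound $N_\mu$ above by counting Shannon--McMillan--Breiman-typical atoms of a partition of diameter $<\epsilon$, and below using a partition with $\mu(\partial\xi)=0$ plus the frequency of visits near $\partial\xi$. Three points need fixing, none of them serious.

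\textbf{The difficulty is reversed.} Your SMB count already gives $\limsup_{T}\frac{1}{T}\log N_\mu(T,\epsilon,\delta)\le h_M(\mu)$ for every fixed $\epsilon$. It is the inequality $h_M(\mu)\le\lim_{\epsilon\to0}\liminf_{T}\frac{1}{T}\log N_\mu(T,\epsilon,\delta)$ that needs $\epsilon\to0$ and carries the real content. Quoting Brin--Katok there imports a later, stronger theorem that rests on the same boundary-frequency idea. So your direct fallback is where the actual work lies.

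\textbf{Order of choices in the fallback.} The threshold $\theta$ for the frequency of visits to the $\epsilon$-neighbourhood of $\partial\xi$ must be chosen after $\xi$, say with $2\theta\log|\xi|<\eta$, and only then $\epsilon$. If you take $\theta=\eta$, you are left with an error of order $\eta\log|\xi|$. This need not tend to $0$, because $|\xi|$ grows as $\eta\to0$.

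\textbf{Shrinking $\delta$ goes the wrong way.} $N_\mu(T,\epsilon,\delta)$ is non-increasing in $\delta$, so a lower bound for small $\delta$ says nothing for $\delta$ near $1$. Instead, choose the good set with $\mu(G)>(1+\delta)/2$. Then any $E$ with $\mu(E)\ge1-\delta$ satisfies $\mu(E\cap G)>(1-\delta)/2$. Your count then gives $N_\mu\ge\frac{1-\delta}{2}e^{n(h-\eta)}$ for large $n$ and every $\delta\in(0,1)$.
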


In our case, $M=T_1(X)$, the unit tangent bundle of $X$, $f$ is the geodesic flow, $d$ is the metric on $T_1(X)$ induced by the hyperbolic metric of $X$, and $\mu$ is a geodesic current. For simplicity, we write $h_X(\mu)$ instead of $h_{T_1(X)}(\mu)$.

\paragraph{Remark.} Given a geodesic current $C \in \mathcal{C}_g$ with $\ell_X(C)=1$, the entropy $h_X(C)$ takes values in the interval $[0,1]$. In particular, $h_X(C)=0$ when $C$ is a closed geodesic or a measured lamination \cite{BirSer} and $h_X(C)=1$ only when $C=L_X$, the Liouville measure.

\paragraph{Anosov closing lemma.}
 Let $M$ be a negatively curved manifold, and $T_1(M)$ the unit tangent bundle of $M$. Let $f_t$ and $d$ denote the geodesic flow and metric on $T_1(M)$, respectively. Anosov closing lemma states:
 
 \begin{Lem}\label{lem: anosov}
For any $\epsilon>0$ there exists $\delta>0$ such that if 
$$
d(v, f_t(v))< \delta, \, \, \, \, \, \, v \in T_1(M)
$$   
then there is a geodesic $\gamma=\{ f_t(w)\}_{s=0}^{t'}$ of length $t'$ where $|t-t'|<\epsilon$ and 
$$
 d(f_s(v), f_s(w)) <\epsilon \quad \text{for} \enspace all \enspace  0\leq s \leq t .
$$
 \end{Lem}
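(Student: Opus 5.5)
The statement to prove is the Anosov closing lemma (Lemma~\ref{lem: anosov}). I would prove it through the hyperbolic structure of the geodesic flow, working in the universal cover and using the contraction of stable and unstable horospheres. The surface case $M=X=\mathbb{H}/\Gamma$ is all the paper needs, but the argument works verbatim for curvature $\le -a^2<0$.

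\emph{Step 1: lift to the universal cover.} Suppose $d(v,f_t(v))<\delta$. Lift $v$ to $\tilde v\in T_1(\widetilde M)$. Since $\delta$ is below the injectivity radius of the compact manifold, there is a unique deck transformation $g\in\Gamma$ with $d(g\tilde v, f_t\tilde v)<\delta$. Let $x,y$ be the base points of $\tilde v$ and $f_t\tilde v$, so that $d(gx,y)<\delta$ and the directions of $g\tilde v$ and $f_t\tilde v$ are $\delta$-close.

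\emph{Step 2: show $g$ is hyperbolic with axis close to the orbit segment.} I would build the piecewise geodesic path obtained by concatating the segments $g^k[x,y]$ with the short connectors $g^k[y,gx]$. The segments have length $t$. When $t$ is bounded below by some $t_0$, the angles at the junctions are $O(\delta)$, so by standard CAT$(-a^2)$ comparison this bi-infinite broken geodesic is a quasi-geodesic, and in fact a local geodesic up to an $O(\delta)$ error. The Morse lemma for negative curvature then says it lies within $\eta(\delta)\to 0$ of a bi-infinite geodesic $A$. That geodesic is $g$-invariant, hence is the axis of $g$. Its translation length is $t'$, and $|t'-t|\le 2\delta+O(\delta)$.

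The case of small $t<t_0$ needs separate treatment. There $d(v,f_tv)<\delta$ forces $t<\delta$, since there are no short closed loops of length below the injectivity radius, and $t$ is bounded away from $0$ otherwise. We can choose $\delta$ small so that this case is vacuous.

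\emph{Step 3: shadowing estimate.} Let $w$ be the unit vector on $A$ closest to $\tilde v$, projected to $M$. Then $\tilde v$ and $w$ are $\eta$-close at time $0$, and $f_t\tilde v$ and $f_{t'}w$ are $\eta$-close at time $t$. By convexity of the distance function between geodesics in nonpositive curvature, $s\mapsto d(\pi f_s\tilde v,\pi f_s w)$ is convex. Its maximum on $[0,t]$ is therefore attained at the endpoints, up to a reparametrization error of $|t-t'|$. Passing from base points to unit vectors, using the Sasaki metric and again convexity, or the comparison of directions, gives $d(f_s v,f_s w)<C\eta+|t-t'|$ for all $0\le s\le t$. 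Given $\epsilon$, I then choose $\delta$ so that $C\eta(\delta)+3\delta<\epsilon$. Since $A$ projects to a closed geodesic of length $t'$, the orbit $\{f_sw\}_{s=0}^{t'}$ is the required geodesic $\gamma$.

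\emph{Main obstacle.} The delicate point is Step 2: making the Morse-lemma constant uniform and showing it tends to $0$ with $\delta$, independently of $t$. I would handle this by explicit hyperbolic trigonometry in $\mathbb{H}$, the case the paper needs. Consecutive segments of length $\ge t_0$ meeting at an angle within $\delta$ of $\pi$ give a broken geodesic whose distance to the axis is $O(\delta)$, uniformly in the number of pieces. This follows from the exponential divergence estimate $\sinh d\le \sinh(\delta)\cosh(\cdot)$ applied to each piece, with the errors staying local rather than accumulating.
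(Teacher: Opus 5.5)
Your geometric route (lift to $\mathbb{H}$, use the deck transformation $g$ and its axis, then convexity) differs from the paper, which gives no argument and only cites the general closing lemma for hyperbolic flows in \cite{katokBook}. Your route is legitimate, more elementary, and gives constants linear in $\delta$.

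\textbf{The gap: short times.} You cannot make the case $t<\delta$ vacuous by shrinking $\delta$. The curve $s\mapsto f_sv$ has unit speed in the Sasaki metric, so $d(v,f_tv)\le t$. Hence every pair $(v,t)$ with $0<t<\delta$ satisfies the hypothesis, whatever $\delta>0$ is. If $\epsilon<\sys(X)/2$ and $t<\sys(X)/2$, the conclusion would produce a closed geodesic of length $t'<t+\epsilon<\sys(X)$, which does not exist. So the lemma is false as literally stated, and no proof can cover this case. You must add the hypothesis $t\ge t_0$ for a fixed $t_0>0$, with $\delta$ depending on $t_0$, as standard formulations of the closing lemma do. This costs nothing in the paper, which uses the lemma only for $t\in[T,(1+\epsilon)T]$ with $T\to\infty$. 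Once $t\ge t_0>\delta$, Step 1 gives $g\ne 1$ (otherwise $t=d(x,y)<\delta$), and your case split via the injectivity radius is no longer needed.

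\textbf{Two repairs in Step 2.} First, the concatenation with connectors $g^k[y,gx]$ does not have junction angles $O(\delta)$: a connector of length $<\delta$ may leave $y$ in any direction. Use instead the $g$-invariant broken geodesic through $\dots,x,gx,g^2x,\dots$. Its exterior angle $\theta$ at each vertex is $O(\delta)$, with a constant depending on $t_0$, by Step 1 and the thin triangle $x,y,gx$.

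Second, as you note, the standard Morse lemma gives a constant of the size of the hyperbolicity constant, not $\eta(\delta)\to0$. Your claim that the errors ``stay local'' is asserted rather than proved. You can bypass this in constant curvature $-1$, which is all the paper needs. Since $\Gamma$ is cocompact and torsion free, $g\neq 1$ is automatically hyperbolic. All $g^kx$ lie at the same distance $r$ from its axis. The Saccheri quadrilateral with vertices $x$, $gx$ and their feet on the axis then gives exactly
\[
\sin(\theta/2)=\tanh r\,\tanh\bigl(d(x,gx)/2\bigr),\qquad \sinh\bigl(d(x,gx)/2\bigr)=\cosh r\,\sinh(t'/2),
\]
where $t'$ is the translation length. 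Since $|d(x,gx)-t|<\delta$ and $t\ge t_0$, these give $r=O(\delta)$ and $|t'-t|\le\delta+2\log\cosh r$, uniformly in $t$, with no Morse lemma. Your Step 3 then goes through.
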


In Lemma \ref{lem: anosov}, $\gamma$ is the geodesic representative of the closed curve obtained from connecting endpoints $v$ and $f_t(v)$ of $\{f_s(v)\}_{s=0}^{t}$. 

See \cite{katokBook} for a reference to this lemma. 

 \paragraph{Self-intersection number of random geodesics.} 
 Recall that 
 
 $$
P_{\epsilon}(T):=| \{\gamma \in \mathcal{G}: \ell_X(\gamma)\leq T, \, i(\gamma,\gamma)\leq \epsilon T^2 \}|.
$$

Define
$$
P_{\epsilon}:= \lim \limits_{T \to \infty} \frac{\log{P_{\epsilon}(T)}}{T}.
$$

It is a well-known result of Katok that $h_X(\mu) \leq P_{\infty}$ for any invariant measure $\mu$ \cite[Thm. 2.1]{Ktok.entrpy}. He related $N_{\mu}(T,\epsilon,\delta)$ to the number of closed geodesics with length $\leq (1+\epsilon')T$, for some $\epsilon'>0$ depending on $\epsilon$. We show that we can also add a constraint on the self-intersection number of these closed geodesics. More precisely, we show that the self-intersection number of these closed geodesics is about $i(\mu,\mu)T^2$ when $\mu$ is ergodic. Then we conclude Theorem \ref{thm: entropy<P}. We first record the following results, which will be used in the proof.

\begin{Lem}\label{lem: p-cont}
   The function $P_\epsilon$ is continuous for $\epsilon>0$.
    \end{Lem}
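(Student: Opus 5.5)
The natural route is a rescaling argument based on monotonicity in $\epsilon$. Throughout I treat $P_\epsilon$ as $\limsup_{T\to\infty}\frac{\log P_\epsilon(T)}{T}$, since existence of the limit has not been shown; the argument works equally for the $\liminf$. The function $\epsilon\mapsto P_\epsilon(T)$ is nondecreasing for each fixed $T$, so $P_\epsilon$ is nondecreasing in $\epsilon$. It also takes values in $[0,1]$, because $P_\epsilon(T)\le P_\infty(T)$ and the latter grows like $e^T/T$. A monotone function has only jump discontinuities, so it suffices to show two things for fixed $\epsilon_0>0$: the limit from the left of $P_\epsilon$ at $\epsilon_0$ is at least $P_{\epsilon_0}$, and the limit from the right is at most $P_{\epsilon_0}$.

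Both inequalities come from comparing $P_{\epsilon}(T)$ with $P_{\epsilon'}(T')$ at a rescaled length. Fix $\lambda>1$. If $\ell_X(\gamma)\le T$ and $i(\gamma,\gamma)\le \epsilon\lambda^2 T^2$, then with $T'=\lambda T$ we have $\ell_X(\gamma)\le T'$ and $i(\gamma,\gamma)\le \epsilon T'^2$. Hence
$$P_{\lambda^2\epsilon}(T)\le P_{\epsilon}(\lambda T).$$
Taking $\frac{1}{T}\log$ and letting $T\to\infty$ gives
$$P_{\lambda^2\epsilon}\le \lambda P_\epsilon.$$

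For right continuity at $\epsilon_0$, set $\epsilon=\epsilon_0$ and let $\lambda\downarrow 1$. This gives $\limsup_{\epsilon\downarrow\epsilon_0}P_\epsilon\le P_{\epsilon_0}$, using $P_\epsilon\le 1$ so that $\lambda P_\epsilon\to P_\epsilon$. For left continuity, set $\epsilon=\epsilon_0/\lambda^2$. This gives $P_{\epsilon_0}\le \lambda P_{\epsilon_0/\lambda^2}$, and letting $\lambda\downarrow1$ yields $P_{\epsilon_0}\le \liminf_{\epsilon\uparrow\epsilon_0}P_\epsilon$. Combined with monotonicity, these give continuity at $\epsilon_0$.

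The main obstacle is the technical point that $P_\epsilon$ is defined as a limit, together with the passage from $\frac{1}{T}\log P_\epsilon(\lambda T)$ to $\lambda P_\epsilon$. This is handled by taking $\limsup$ (or $\liminf$) consistently on both sides and substituting $T'=\lambda T$. The scaling inequality itself is elementary. The restriction $\epsilon_0>0$ is essential, since at $\epsilon=0$ rescaling gives no information.
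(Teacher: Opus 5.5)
Your proposal is correct and is essentially the paper's argument. The paper also combines monotonicity with the rescaling inclusion $P_{\epsilon+\delta}(T)\subset P_{\epsilon}\bigl(\sqrt{(\epsilon+\delta)/\epsilon}\,T\bigr)$ to get $P_{\epsilon+\delta}\le\sqrt{(\epsilon+\delta)/\epsilon}\,P_\epsilon$, which is your $P_{\lambda^2\epsilon}\le\lambda P_\epsilon$ with $\lambda^2=(\epsilon+\delta)/\epsilon$; you additionally make explicit three points the paper leaves implicit: the left-continuity direction, the bound $P_\epsilon\le 1$ needed for $\lambda P_\epsilon\to P_\epsilon$, and the use of $\limsup$ since existence of the limit is not established.
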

    \begin{proof}
        On one hand, we know that $P_{\epsilon}$ is increasing. On the other hand, for any $\delta>0$ we have:
        $$
        P_{\epsilon+\delta} \leq \sqrt{\frac{\epsilon+\delta}{\epsilon}} P_{\epsilon},
        $$
        since 
        $$
        P_{\epsilon+\delta}(T) \subset P_{\epsilon}(\sqrt{\frac{\epsilon+\delta}{\epsilon}}T). 
        $$
    \end{proof}
    
\begin{Lem}\label{lem: hyp}
Let $abcd$ be a hyperbolic $4-$gon with right angles at $b$ and $c$. Assume that the length of the edges $ab$ and $cd$ are $T$, and the length of $bc$ is $\delta$. See Figure \ref{fig: 4gon}. Then we have
$$
\cosh(L)=\cosh(T)^2(\cosh(\delta)-1)+1,
$$
where $L$ is the length of $ad$.
\begin{figure}[h]
    \centering

\tikzset{every picture/.style={line width=0.75pt}} 

\begin{tikzpicture}[x=0.75pt,y=0.75pt,yscale=-0.8,xscale=0.8]

\draw    (210,195) .. controls (241.2,164.8) and (287.2,170.8) .. (310,195) ;
\draw    (152.2,88.8) .. controls (183.2,114.8) and (199.2,147.8) .. (210,195) ;
\draw    (310,195) .. controls (314.2,159.8) and (318.2,110.8) .. (347.2,74.8) ;
\draw [color={rgb, 255:red, 95; green, 159; blue, 31 }  ,draw opacity=1 ]   (152.2,88.8) .. controls (190.2,47.8) and (295.2,41.8) .. (347.2,74.8) ;
\draw  [color={rgb, 255:red, 0; green, 0; blue, 0 }  ,draw opacity=1 ][line width=0.75] [line join = round][line cap = round] (208.27,187.93) .. controls (210.16,187.93) and (211.56,183.93) .. (213.27,183.93) .. controls (214.37,183.93) and (215.27,189.15) .. (215.27,189.93) ;
\draw  [color={rgb, 255:red, 0; green, 0; blue, 0 }  ,draw opacity=1 ][line width=0.75] [line join = round][line cap = round] (304.67,190.27) .. controls (304.67,188.24) and (305.67,186.29) .. (305.67,184.27) .. controls (305.67,182.32) and (308.72,187.27) .. (310.67,187.27) ;

\draw (137,76.4) node [anchor=north west][inner sep=0.75pt]    {$a$};
\draw (212,198.4) node [anchor=north west][inner sep=0.75pt]    {$b$};
\draw (312,198.4) node [anchor=north west][inner sep=0.75pt]    {$c$};
\draw (352,67.4) node [anchor=north west][inner sep=0.75pt]    {$d$};
\draw (239,33.4) node [anchor=north west][inner sep=0.75pt]    {$L$};
\draw (168,127.4) node [anchor=north west][inner sep=0.75pt]    {$T$};
\draw (328,122.4) node [anchor=north west][inner sep=0.75pt]    {$T$};
\draw (253,154.4) node [anchor=north west][inner sep=0.75pt]    {$\delta $};

\end{tikzpicture}

    \caption{A hyperbolic $4$-gon}
    \label{fig: 4gon}
\end{figure}
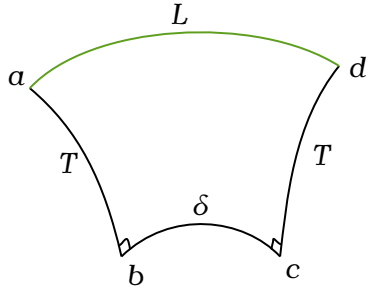
\end{Lem}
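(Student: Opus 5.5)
The plan is to compute $L$ directly in the hyperboloid model, using Fermi coordinates adapted to the geodesic through $b$ and $c$. Let $\langle x,y\rangle = -x_0y_0+x_1y_1+x_2y_2$ be the Minkowski form on $\mathbb{R}^{2,1}$, and model $\mathbb{H}$ as $\{x:\langle x,x\rangle=-1,\ x_0>0\}$. In this model the hyperbolic distance satisfies $\cosh d(x,y) = -\langle x,y\rangle$.

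First I would take the geodesic containing $bc$ to be $\sigma(s)=(\cosh s,\sinh s,0)$. Its unit normal is $n=(0,0,1)$, which is orthogonal to both $\sigma(s)$ and $\sigma'(s)$. The geodesic leaving $\sigma(s)$ perpendicularly is then $t\mapsto \cosh t\,\sigma(s)+\sinh t\, n$. Define
\[
F(s,t)=\big(\cosh t\cosh s,\ \cosh t\sinh s,\ \sinh t\big).
\]
This point lies on $\mathbb{H}$, and $F(s,t)$ is at signed distance $t$ from $\sigma$ with foot $\sigma(s)$.

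Next I would use the right angles at $b$ and $c$. Put $b=\sigma(0)$ and $c=\sigma(\delta)$. The sides $ab$ and $cd$ are perpendicular to $bc$, have length $T$, and lie on the same side of $bc$ (the quadrilateral is convex). Hence
\[
a=F(0,T),\qquad d=F(\delta,T).
\]
The same-side condition is the one point to check. It fixes the sign of $\sinh T$ in both coordinates; on opposite sides one of them would be $-T$, and the formula would change.

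Finally, a direct computation gives
\[
\langle F(s_1,t_1),F(s_2,t_2)\rangle=-\cosh t_1\cosh t_2\cosh(s_1-s_2)+\sinh t_1\sinh t_2 ,
\]
using $\cosh s_1\cosh s_2-\sinh s_1\sinh s_2=\cosh(s_1-s_2)$. Substituting $s_1-s_2=-\delta$ and $t_1=t_2=T$, we get
\[
\cosh L=\cosh^2 T\cosh\delta-\sinh^2 T=\cosh^2T(\cosh\delta-1)+1,
\]
where the last step uses $\sinh^2T=\cosh^2T-1$. There is no real obstacle here; the only care needed is the same-side convention noted in the previous step.
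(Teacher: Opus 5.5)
Your proof is correct and is essentially the paper's argument. The paper simply cites Buser's formula (2.3.2) for a quadrilateral with two adjacent right angles, $\cosh L=\cosh a\cosh b\cosh\delta-\sinh a\sinh b$ for legs $a,b$, and your computation of $-\langle F(0,a),F(\delta,b)\rangle$ in Fermi coordinates on the hyperboloid is a self-contained derivation of exactly that identity, specialized to $a=b=T$. Your same-side caveat also holds automatically: if $a$ and $d$ lay on opposite sides of $bc$, the half-turn about the midpoint of $bc$ would swap them, so $ad$ would pass through that midpoint and $abcd$ would not be a simple 4-gon.
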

\begin{proof}
    The proof is implied from Equation $(2,3,2)$ in \cite{Bsr}.
\end{proof}
Let $f_{[0,t]}(v)$ be the geodesic arc of length $t$ in $T_1(X)$ starting at $v \in T_1(X)$.

\begin{Prop}\label{prop: entropy.a.e.self-int}
Let $c_0>0$ be a sufficiently small constant depending on $X$. Let $C$ be an ergodic geodesic current. Then for $C$-almost every $v \in T_1(X)$, and for any sequence $t_n \to \infty$ satisfying
$$
d\bigl(f_{t_n}(v),v\bigr) \leq c_0,
$$
we have
$$
\lim_{n \to \infty} \frac{i(\gamma_n,\gamma_n)}{\ell_X(\gamma_n)^2} = i(C,C),
$$
where $\gamma_n$ denotes the geodesic representative of the closed curve obtained from connecting the endpoints $v$ and $f_{t_n}(v)$ of the geodesic segment $f_{[0,t_n]}(v)$.
\end{Prop}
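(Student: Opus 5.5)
The plan is to compare $\gamma_n$, viewed as a geodesic current normalized by length, with the empirical measure of the orbit segment $f_{[0,t_n]}(v)$. Then we use the ergodic theorem together with continuity of Bonahon's intersection form. Normalize $C$ so that $\ell_X(C)=1$; the statement is scale invariant, so this loses nothing. Let
\[
\mu_n:=\frac{1}{t_n}\int_0^{t_n}\delta_{f_s(v)}\,ds .
\]
Apply the Birkhoff ergodic theorem to a countable dense set of continuous functions on the compact space $T_1(X)$. This gives a full-measure set of $v$ with $\mu_n\to C$ weak-$*$ for every sequence $t_n\to\infty$. Since $C$ is invariant under $v\mapsto -v$, the symmetrized measures $\frac12(\mu_n+\iota_*\mu_n)$ also converge to $C$.

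\textbf{Step 1 (closing).} Choose $c_0$ smaller than the $\delta$ of Lemma~\ref{lem: anosov} for some fixed small $\epsilon_0$, and smaller than the injectivity radius. Lemma~\ref{lem: anosov} then gives a closed geodesic $\gamma_n$ that $\epsilon_0$-shadows $f_{[0,t_n]}(v)$, with $|\ell_X(\gamma_n)-t_n|<\epsilon_0$. I want the shadowing to be much better away from the endpoints. Lift to $\mathbb{D}$: the lift of $\gamma_n$ and the lifted segment start and end within distance $O(c_0)$ of each other. By hyperbolic geometry (Lemma~\ref{lem: hyp} and the usual exponential contraction in thin quadrilaterals), at time $s$ the distance between them is $O(c_0)\bigl(e^{-s}+e^{-(t_n-s)}\bigr)$. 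Hence for any continuous test function $\varphi$,
\[
\frac{1}{\ell_X(\gamma_n)}\int_{\gamma_n}\varphi-\int\varphi\,d\mu_n\to 0 .
\]
It follows that $\gamma_n/\ell_X(\gamma_n)\to C$ in the weak topology on $\Cg$ (which is the weak-$*$ topology on symmetric flow-invariant measures on $T_1(X)$).

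\textbf{Step 2 (continuity).} Bonahon's intersection form is continuous on $\Cg$ and homogeneous of degree $(1,1)$. Therefore
\[
\frac{i(\gamma_n,\gamma_n)}{\ell_X(\gamma_n)^2}=i\Bigl(\frac{\gamma_n}{\ell_X(\gamma_n)},\frac{\gamma_n}{\ell_X(\gamma_n)}\Bigr)\to i(C,C).
\]
This is exactly the claim.

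\textbf{Main obstacle.} The delicate point is Step 1: ensuring that the closed geodesic, not just the orbit segment, equidistributes toward $C$. The risk is that the $O(c_0)$ error near the closing point contributes too much. It does not: the contribution of the closing region to $\int_{\gamma_n}\varphi$ is $O(\|\varphi\|_\infty)$, and the uniform $\epsilon_0$-closeness from Lemma~\ref{lem: anosov} holds along the whole segment. Combined with uniform continuity of $\varphi$ on compact $T_1(X)$, the difference of averages is at most $\omega_\varphi(\epsilon_0)+O(1/t_n)$, where $\omega_\varphi$ is the modulus of continuity. Here we apply Lemma~\ref{lem: anosov} along a sequence $\epsilon_0\to0$ by shrinking $c_0$ with $n$, or we use the exponential estimate above, which removes the dependence on $\epsilon_0$ for fixed $c_0$. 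One also has to check that weak-$*$ convergence of the normalized measures on $T_1(X)$ is the topology in which Bonahon's $i$ is continuous. This is standard, since $\Cg$ carries the weak topology as measures on $T_1(X)$ (equivalently on the space of geodesics).
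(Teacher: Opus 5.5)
Your proposal is correct and follows essentially the paper's route. Both arguments use Birkhoff averages along $f_{[0,t_n]}(v)$, the Anosov closing lemma, and the fact that the shadowing improves exponentially away from the two endpoints: the paper gets this from Lemma~\ref{lem: hyp} after cutting off $\sqrt{T}$ at each end, you from the bound $O(c_0)\bigl(e^{-s}+e^{-(t_n-s)}\bigr)$. Both then finish with continuity of $i$, and your handling of the countable dense family of test functions and of the symmetrization under $v\mapsto -v$ is more careful than the paper's.

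Two small corrections. First, the statement is not scale invariant, since $i(C,C)$ is quadratic in $C$ while the left side is unchanged. So $\ell_X(C)=1$ is an implicit hypothesis here, exactly as in the paper's use of $\int g\,dC$ as the Birkhoff limit. Second, ``shrinking $c_0$ with $n$'' is not available because $c_0$ is fixed by the hypothesis. It is the exponential estimate, after a bounded constant time shift in the flow direction, that does the work.
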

\begin{proof}
By \textit{Birkhoff's ergodic theorem}, for $C$-almost every $v \in T_1(X)$ we have
\begin{equation}\label{erg}
\lim_{T \to \infty} \frac{1}{T} \int_0^T g(f_t(v))\,dt
=
\int_{T_1(X)} g \, dC
\end{equation}
for any $g \in C^1(T_1(X))$.

On the other hand, the geodesic representative $\gamma_n$ is a long closed geodesic that shadows the segment $f_{[0,t_n]}(v)$. More precisely, by the Anosov closing lemma (Lemma ~\ref{lem: anosov}), the curve $\gamma_n$ and $f_{[0,t_n]}(v)$ remain within $\epsilon_0$-neighborhood of each other, and $|\ell_X(\gamma_n) - t_n| \le \epsilon_0$.

For simplicity, set $T := t_n$. Assume that function $g$ is bounded above by $b>0$. We aim to show $\gamma_n/\ell_X(\gamma_n) \to C$ as $n \to \infty$. We have
$$
\left|
\frac{1}{T} \int_0^T g(f_t(v))\,dt
-
\frac{1}{\ell_X(\gamma_n)} \int_0^{\ell_X(\gamma_n)} g(\gamma_n(t))\,dt
\right| \le
\frac{1}{T} \int_0^T |g(f_t(v)) - g(\gamma_n(t))|\,dt
+ b\,\epsilon_0.
$$

We split the integral as
\begin{equation}\label{equ: integrals}
\le
\frac{1}{T} \int_{\sqrt{T}}^{T-\sqrt{T}} |g(f_t(v)) - g(\gamma_n(t))|\,dt
+
\frac{1}{T} \int_{I_T} |g(f_t(v)) - g(\gamma_n(t))|\,dt
+ b\,\epsilon_0,
\end{equation}
where $I_T = [0,T] \setminus [\sqrt{T},\,T-\sqrt{T}]$.

The corresponding points on $\gamma_n$ and $f_{[0,T]}(v)$ are $\epsilon_0$-close. By basic hyperbolic geometry, these two geodesics achieve their minimum distances at two interior points, and the distance between corresponding points increases monotonically away from this region. Let $D$ be the maximum distance between the corresponding points on $\gamma_n$ and $f_{[\sqrt{T},T-\sqrt{T}]}(v)$. 

By applying Lemma~\ref{lem: hyp}, we have

\[
\cosh(D)
\le
1 + \frac{2\cosh(T)^2(\cosh(\delta)-1)}{e^{2\sqrt{T}}},
\qquad
1 + \cosh(T)^2(\cosh(\delta)-1) \leq \cosh(\epsilon_0).
\]

In particular, it is straightforward to see that $D \to 0$ as $T \to \infty$. Since function $g$ is uniformly continuous (being continuous on a compact space), the first integral in Equation \eqref{equ: integrals} converges to zero. The remaining terms are
\[
\frac{1}{T} \int_{I_T} |g(f_t(v)) - g(\gamma_n(t))|\,dt + b\,\epsilon_0
\le
\frac{4\sqrt{T}\,b}{T} + b\,\epsilon_0,
\]
which also tends to zero as $T \to \infty$.

Therefore,
\[
\lim_{n \to \infty}
\frac{1}{\ell_X(\gamma_n)}
\int_0^{\ell_X(\gamma_n)} g(\gamma_n(t))\,dt
=
\int_{T_1(X)} g \, dC.
\]

This shows that the normalized closed geodesics $\gamma_n / \ell_X(\gamma_n)$ converge weakly to $C$. By continuity of the intersection form $i(\cdot,\cdot)$, it follows that
\[
\frac{i(\gamma_n,\gamma_n)}{\ell_X(\gamma_n)^2}
\to
i(C,C)
\quad \text{as } n \to \infty,
\]
as claimed.
\end{proof}

Let $c_0$ be a sufficiently small constant depending on $X$. Define a subset $A_{(T,\epsilon)} \subset T_1(X)$ that consists of vectors $v$ such that for some $t\in[T, T(1+\epsilon)]$ with $d(v,f_t(v)) \le c_0$ we have
\[
\left|
\frac{i(\gamma,\gamma)}{\ell_X(\gamma)^2}
-
i(C,C)
\right|
\le \epsilon
\]
where $\gamma$ is the closed geodesic obtained from $f_{[0,t]}(v)$ by connecting its endpoints.

Let $B_{(T, \epsilon)}:= \{ v \in T_1(X) \, : \,  \, \exists \,  t \in[T, (1+ \epsilon)T] \, \, \, \, that \, \, \, \, d(v, f_t(v))< \epsilon   \}$.

\begin{Lem} \label{lem: set=1} For any $\epsilon > 0$, as $T \rightarrow \infty$ we have:

\begin{enumerate}
\item $ C(B_{(T, \epsilon)}) \to 1$,
\item $C(A_{(T, \epsilon)}) \to 1$.
  
\end{enumerate}
\end{Lem}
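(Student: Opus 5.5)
We prove the two statements in order, using Poincaré recurrence and Birkhoff's theorem for part 1, and then Proposition~\ref{prop: entropy.a.e.self-int} for part 2. Throughout, $C$ is normalized as a probability measure ($\ell_X(C)=1$), invariant and ergodic under the geodesic flow.

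\emph{Part 1.} Fix $\epsilon>0$ and cover $T_1(X)$ by finitely many Borel sets $U_1,\dots,U_N$ of diameter $<\epsilon$. For $C$-a.e.\ $v\in U_j$ with $C(U_j)>0$, Birkhoff's ergodic theorem applied to $\mathbf 1_{U_j}$ gives
$$
\frac1T\,\bigl|\{t\in[0,T]: f_t(v)\in U_j\}\bigr|\to C(U_j)>0 .
$$
Set $S(T)=|\{t\le T: f_t(v)\in U_j\}|$. Then
$$
S((1+\epsilon)T)-S(T)=\bigl(\epsilon\, C(U_j)+o(1)\bigr)T,
$$
which is positive for $T$ large. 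Hence there is $t\in[T,(1+\epsilon)T]$ with $f_t(v)\in U_j$, and so $d(v,f_t(v))<\epsilon$. The sets $U_j$ with $C(U_j)=0$ together carry measure zero. Therefore almost every $v$ eventually lies in $B_{(T,\epsilon)}$, meaning the indicator $\mathbf 1_{B_{(T,\epsilon)}}(v)\to1$ pointwise a.e. Dominated convergence then gives $C(B_{(T,\epsilon)})\to1$.

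\emph{Part 2.} Fix $\epsilon>0$ and choose $\epsilon'\le\min(\epsilon,c_0)$, with $c_0$ as in Proposition~\ref{prop: entropy.a.e.self-int}. Let $G$ be the full-measure set of $v$ that satisfy both the conclusion of Proposition~\ref{prop: entropy.a.e.self-int} and the pointwise conclusion of Part 1 with $\epsilon'$. We claim that for every $v\in G$ there is $T_0(v)$ such that $v\in A_{(T,\epsilon)}$ for all $T\ge T_0(v)$. Suppose not. Then there is a sequence $T_n\to\infty$ with $v\notin A_{(T_n,\epsilon)}$. For $n$ large, $v\in B_{(T_n,\epsilon')}$, so we may pick $t_n\in[T_n,(1+\epsilon')T_n]\subset[T_n,(1+\epsilon)T_n]$ with $d(v,f_{t_n}(v))<\epsilon'\le c_0$. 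Since $t_n\to\infty$, Proposition~\ref{prop: entropy.a.e.self-int} applied to $(t_n)$ yields
$$
\frac{i(\gamma_n,\gamma_n)}{\ell_X(\gamma_n)^2}\to i(C,C).
$$
So for large $n$ the deviation from $i(C,C)$ is at most $\epsilon$, which means $v\in A_{(T_n,\epsilon)}$, a contradiction. Hence $\mathbf 1_{A_{(T,\epsilon)}}\to1$ a.e., and dominated convergence gives $C(A_{(T,\epsilon)})\to1$.

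The main subtlety lies in Part 2. Proposition~\ref{prop: entropy.a.e.self-int} is stated for a fixed $v$ and \emph{any} return sequence, and this is exactly what allows the contradiction argument along an arbitrary sequence $T_n$. We must also make sure that the exceptional null set in the Proposition does not depend on the sequence. Its proof shows this is the case: the null set comes only from Birkhoff's theorem applied to a countable dense family of test functions $g$. A further point to check is measurability of $A_{(T,\epsilon)}$. If it is problematic, it can be handled by working with inner measure, or by noting that the argument shows $A_{(T,\epsilon)}$ contains the measurable set $\{v\in G: T_0(v)\le T\}$, whose measure tends to $1$.
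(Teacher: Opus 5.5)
Your proof is correct and follows the paper's route. Birkhoff's theorem, applied to the indicators of a finite cover of $T_1(X)$ by sets of diameter $<\epsilon$, gives a return time in $[T,(1+\epsilon)T]$ for a.e.\ $v$; Part 2 then follows by combining this (with $\epsilon'\le\min(\epsilon,c_0)$) with Proposition~\ref{prop: entropy.a.e.self-int}, and you simply make explicit what the paper leaves implicit (dominated convergence, the contradiction along a sequence $T_n$, sequence-independence of the null set).
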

\begin{proof}

For part $1$, cover $T_1(X)$ by finitely many balls of radius $\epsilon/2$. Let $K$ be one of these balls and $\chi_K$ the characteristic function of $K$. By ergodicity of $C$ for $a.e.$ $v \in K \subset T_1(X)$ we have:
\begin{equation}\label{erg.ch}
\lim \limits_{T \to \infty} \frac{1}{T} \int_0^T \chi_K(f_t(v)) dt=  \int_{T_1(X)} \chi_K(v) dC= C(K).
\end{equation}

Therefore, for large enough $T$, we have:
$$
\int_0^T \chi_K(f_t(v)) dt < \int_0^{T(1+\epsilon)} \chi_K(f_t(v)) dt.
$$
We conclude that for $C-a.e.$ $v \in K$ and larg enough $T$ there exists $t \in [T, T(1+\epsilon)]$ such that $f_t(v)$ is in $K$ and, therefore, $d(v, f_t(v))<\epsilon$.
Applying this to all balls in the covering of $T_1(X)$ implies part $1$. Part $2$ is implied from part $1$ and Proposition \ref{prop: entropy.a.e.self-int}.
\end{proof}

\textbf{Proof of Theorem \ref{thm: entropy<P}.} Recall that $N_C(T, \epsilon, \delta)$ is the minimum number of $\epsilon-$balls with respect to $d_T$ metric such that they cover a set of measure $> 1- \delta$ in $T_1(X)$. From Proposition \ref{prop: katok}, we know that $h_X(C)$ is equal to the exponential growth rate of $N_C(T, \epsilon, 1/2)$. Define $Y := A_{(T,\delta)} \subset T_1(X)$. By Lemma \ref{lem: set=1}, for every $ \delta > 0$ and $T$ large enough $C(Y) \geq \frac{1}{2}$. We will choose $\delta$ later in terms of $\epsilon$. 

We say that a set $\Lambda$ is an$(T, \epsilon)-$separated set if the $d_T$ distance between every two points of $\Lambda$ is $>\epsilon$. Let $\Lambda_{(T,\epsilon)}$ be a maximal $(T, 3\epsilon)-$separated subset of $Y$. Then balls of radius $3\epsilon$, with respect to metric $d_T$, with centers at the points of $\Lambda_{(T,3\epsilon)}$ cover $Y$. Therefore, we have:
\begin{equation}
 |\Lambda_{(T,\epsilon)}| \geq N_C(T, 3\epsilon, \frac{1}{2}).
\end{equation}
Given $v \in \Lambda_{(T,\epsilon)}$, using the fact that $v \in A_{(T,\delta)}$ and applying Anosov's closing lemma, we obtain a closed geodesic $\gamma_v$ which is close to $f_{[0,T]}(v)$. More precisely, choose $\delta>0$ small enough so that $\gamma_v$ is $\epsilon-$close to $f_{[0,t]}(v)$, for some $t\in [T,(1+\epsilon)T]$, and $\ell_X(\gamma_v)<(1+ \epsilon)T$.
 From Anosov closing lemma, we know that there exists a point $q_v$ on $\gamma_v$ such that $d_T(q_v, v)< \epsilon$. 
When $v_1, v_2\in \Lambda_{(T,\epsilon)}$ are distinct, we have:
\begin{equation}\label{dis}       
d_T(q_{v_1}, q_{v_2}) > d_T(v_1,v_2)-d_T(v_1, q_{v_1})-d_T(v_2, q_{v_2})> 3\epsilon-2\epsilon=\epsilon
\end{equation}

 Equation ~\ref{dis} implies that the number of points $q_{v'}$ on each obtained closed geodesic $\gamma_v$ is $<\ell_X(\gamma)/\epsilon$. We know $\ell_X(\gamma)<(1+ \epsilon)T$. Therefore, the map from $\Lambda_{(\epsilon, T)}$ to these closed geodesics is at most $(1+\epsilon)T/\epsilon$ to $1$. We conclude that the number of obtained closed geodesics is

\begin{equation}
 \geq \frac{ |\Lambda_{(\epsilon, T)}|}{(1+ \epsilon)T/ \epsilon} \geq \frac{\epsilon}{T(1+ \epsilon)}N_C(T, 3\epsilon, \frac{1}{2}).
\end{equation}

Note that $|i(\gamma_v,\gamma_v)/\ell_X(\gamma_v)^2-i(C,C)| \leq \epsilon$, therefore, the self-intersection number of $\gamma_v$ is

$$
i(\gamma_v,\gamma_v) \leq (i(C,C)+\epsilon)\ell_X(\gamma_v)^2,
$$
and its length is $\leq (1+\epsilon)T$.

This implies that
$$
P_{i(C, C)+\epsilon}((1+\epsilon)T) \geq \frac{\epsilon}{T(1+ \epsilon)}N_C(T, 3\epsilon, \frac{1}{2}).
$$
Let $T'=(1+\epsilon)T$, we have

$$
\frac{\log{P_{i(C, C)+\epsilon}(T')}}{T'} \geq \frac{1}{(1+\epsilon)} \frac{ \log \frac{\epsilon}{T(1+ \epsilon)}+ \log N_C(T, 3\epsilon, \frac{1}{2})}{T};
$$
tending $T \rightarrow \infty$ we have
$$
p_{i(C, C)+\epsilon} \geq \frac{1}{1+\epsilon} \lim \limits_{T \to \infty} \frac{\log N_C(T,3\epsilon,\frac{1}{2})}{T}.
$$
Now by letting $\epsilon\to 0$, from Lemma \ref{lem: p-cont} and Proposition \ref{prop: katok}, we have $P_{i(C,C)}\geq h_X(C)$.
\qed
\paragraph{Proof of Theorem \ref{thm: main.entropy}.} From Theorems \ref{thm: entropy<P} and \ref{thm: P-closed-geod}, we have:

$$
h_X(C)\leq P_{i(C,C)}\leq 385(g-1)\sqrt{i(C,C)}|\log(\frac{b_X}{\sqrt{i(C,C)}})|,
$$
as required.
\qed

\begin{Remark}\label{remark: b1.1,3} The constants $b_X$ appearing in Theorem \ref{thm: main.entropy} and Theorem \ref{thm: P-closed-geod} are the same. See Remark \ref{remark: bc.bound}.
\end{Remark}
\newpage

\bibliographystyle{math}
\bibliography{biblio}

\end{document}